\newcommand{\andname}{and}
\newcommand{\volumename}{volume}
\newcommand{\Inname}{in}
\newcommand{\ofname}{of}
\newcommand{\pagesname}{pages}
\let\ifanglais\iftrue
\def\R{{\mathbb R}}
\def\N{{\mathbb N}}
\def\vol{{\rm Vol}}
\newcommand{\vect}[1]{\overrightarrow{#1}} % long vector arrow
\newcommand{\ed}[1]{\textrm{d} #1}
\DeclareMathOperator{\arctanh}{arctanh}
\newcommand{\asb}{AsB}
\newcommand{\cA}{\mathcal A}
\newtheoremstyle{mesthm}% name
  {10pt plus 1pt minus 1pt}%      Space above, empty = `usual value'
  {9pt plus 1pt minus 6pt}%      Space below
  {\slshape}% Body font
  {0.5cm}%         Indent amount (empty = no indent, \parindent = para indent)
  {\bfseries}% Thm head font
  {.}%        Punctuation after thm head
  {1ex}% Space after thm head: \newline = linebreak
  {}%         Thm head spec
\newtheoremstyle{mesdefi}% name
  {6pt plus 1pt minus 1pt}%      Space above, empty = `usual value'
  {6pt plus 1pt minus 1pt}%      Space below
  {}% Body font
  {0.5cm}%         Indent amount (empty = no indent, \parindent = para indent)
  {\bfseries}% Thm head font
  {.}%        Punctuation after thm head
  {1ex}% Space after thm head: \newline = linebreak
  {}%
\theoremstyle{mesthm}
\newtheorem{lema}{\ifanglais{\large L}emma\else{\large L}emme\fi}
\newtheorem{theo}[lema]{\ifanglais{\large T}heorem\else {\large
    T}h\'eor\`eme\fi}
\newtheorem*{theo*}{\ifanglais{\large T}heorem\else {\large T}h\'eor\`eme\fi}
\newtheorem{prop}[lema]{{\large P}roposition} 
\newtheorem{cor}[lema]{{\large C}orollary}
\newtheorem*{cor*}{{\large C}orollary}
\newtheorem{claim}[lema]{\ifanglais Claim\else Affirmation\fi}
\theoremstyle{mesdefi}
\newtheorem{defi}[lema]{\ifanglais{\large D}efinition\else{\large
    D}\'efinition\fi} 
\newtheorem*{defi*}{\ifanglais Definition\else D\'efinition\fi}
\newtheorem{rmq}[lema]{\ifanglais{\large R}emark\else{\large
    R}emarque\fi}
\newcounter{step}
\DeclareMathOperator{\Ent}{\overline{Ent}}
\DeclareMathOperator{\ent}{Ent}
\DeclareMathOperator{\lent}{\underline{Ent}}
\DeclareMathOperator{\Area}{Area}
\title[Approximability and volume entropy]%
{Approximability of convex bodies and volume entropy in Hilbert geometry}
\author[C. Vernicos]{Constantin Vernicos}
\email{Constantin.Vernicos@umontpellier.fr}
\address{%
  Institut montpelierain Alexander Grothendieck\\ 
  Universit\'e de Montpellier\\
  Case Courrier 051\\
  Place Eug\`ene Bataillon \\
  F--34395 Montpellier Cedex\\ 
  France} 
\subjclass[2010]{53C60 (primary), 53C24, 58B20, 53A20 (secondary).}
\thanks{The author acknowledges that this material is based upon works partially supported by the Science Foundation Ireland under a Stokes award}
\begin{document}

\begin{abstract}
The approximability of a convex body is a number which measures the difficulty in approximating
that convex body by polytopes. In the interior of a convex body one can define its Hilbert
geometry. 
We prove on the one hand that the volume entropy is twice the approximability for a  Hilbert geometry in dimension two end three, and on the other hand that in higher dimensions it is a lower bound of the entropy.
 As a corollary we solve the volume entropy upper bound conjecture in dimension
three and give a new proof in dimension two from the one given in \cite{berck_Bernig_Vernicos}. Moreover, our method allows us to prove the existence of Hilbert geometries with intermediate volume growth one the one hand,
and that in general the volume entropy is not a limit on the other hand.
\end{abstract}

\maketitle

\section*{Introduction and statement of results}

Hilbert geometries are all the metric spaces obtained by defining the 
so-called Hilbert distance on open bounded convex sets in $\R^n$. 
The definition of this distance
uses cross-ratios in the same way as in Klein projective model of 
the hyperbolic geometry \cite{dhilbert}. These metric spaces are actually 
length space whose structure is defined by a Finsler metric which is Riemannian
if and only if the underlying open bounded convex set is an ellipsoid \cite{dckay}. 

These geometries have attracted a lot of interest
see for example the works of
 Y.~Nasu~\cite{nasu61},  W.~Goldmann~\cite{Goldman-1990}, 
P.~de~la~Harpe~\cite{dlharpe},
A.~Karlsson and G.~Noskov~\cite{Karlsson-Noskov-2002}, 
Y.~Benoist~\cite{be,yvessurvey}, 
T.~Foertsch and A.~Karlsson~\cite{foertsch-karlsson05}, 
I.~Kim~\cite{Kim-2005},
B.~Colbois, C.~Vernicos and P.~Verovic~\cite{cvv1,cvv2}, 
B.~Lins and R.~Nussbaum~\cite{Lins-Nussbaum-2008},
A.~Borisenko and E.~Olin~\cite{Borisenko-Olin-2008,Borisenko-Olin-2011}, 
B.~Lemmens and C.~Walsh~\cite{Lemmens-Walsh-2011},
C.~Vernicos~\cite{ver09,ver2011,ver2012},
L.~Marquis~\cite{ Marquis-2012},
M.~Crampon and L.~Marquis~\cite{Crampon-Marquis-2014},  D.~Cooper, D.~Long and S.~Tillman~\cite{Cooper-Long-Tillmann}), X.~Nie~\cite{Nie-2011} and the Handbook of Hilbert geometry~\cite{Handbook}.

The present paper focuses on the volume growth of these geometries and more specifically on the volume entropy.

Let $\Omega$ be a bounded open convex set in $\R$ endowed with its Hilbert geometry. If we consider the \textsl{Busemann} volume  $\vol_\Omega$ and 
denote by $B_\Omega(p,r)$ the metric ball of radius $r$ centred at the
point $p\in \Omega$, then the \textsl{lower and upper volume entropies} of 
$\Omega$ will be defined respectively by
\begin{multline}
     \lent(\Omega)=\liminf_{r\to +\infty} \dfrac{\ln\bigl(\vol_\Omega(B_\Omega(p,r)\bigr)}{r}\text{, and }\\  \Ent(\Omega)=\limsup_{r\to +\infty} \dfrac{\ln\bigl(\vol_\Omega(B_\Omega(p,r)\bigr)}{r}\text{.}
  \end{multline}
When the two limits coincide we denote their common limit by 
$\ent(\Omega)$ and call it the \textsl{volume entropy} of $\Omega$.

Let us stress out that in this definition the upper and lower volume entropy of 
$\Omega$ do not depend
on the base point $p$ and are
actually projective invariant attached to $\Omega$.

The question we address in this essay is twofold. On
the one hand it is an investigation of the existence of
an analogue, for all Hilbert geometries, of
the relation  between the volume entropy and the Hausdorff dimension
of the radial limit set on the universal cover of a compact Riemannian manifold 
with non-positive curvature.
On the other hand we focus on 
the \textsl{volume entropy upper bound conjecture}
which states that if $\Omega$ is an open and bounded convex subset of $\R^n$, then $\Ent(\Omega)\leq n-1$.
To put our work into perspective let us recall the main related results.

The first one is a complete answer to  the conjecture in the two-dimensional case
by G.~Berck, A.~Bernig and C.~Vernicos in~\cite{berck_Bernig_Vernicos}, 
where the authors actually
obtained an upper bound as a function of $d$, the upper Minkowski dimension (or \textsl{ball-box} dimension) of the set of extreme points of $\Omega$, namely 
\begin{equation}
 \Ent(\Omega) \leq \frac{2}{3-d} \leq 1.
\end{equation}

The second result is a more precise statement with respect to the asymptotic volume growth of balls.
It involves another projective invariant introduced by G.~Berck, A.~Bernig and C.~Vernicos in the introduction of~\cite{berck_Bernig_Vernicos} called the  \textsl{centro-projective}  area of $\Omega$, and defined by
\begin{equation}
 \cA_p(\Omega):=\int_{\partial \Omega}\frac{\sqrt{k(x)}}{\langle n(x),x-p\rangle^{\frac{n-1}{2}}}\left(\frac{2(\alpha x)}{1+a(x)}\right)^{\frac{n-1}{2}}\ dA(x),
\end{equation}
where for any $x\in \partial\Omega$,  $k(x)$ is the Gauss curvature, $n(x)$ the outward normal and $\alpha(x)>0$ is the function  defined by $p-\alpha(x)(x-p)\in \partial\Omega$. Let us recall here that both $k$ and $n$ are defined almost everywhere as Alexandroff's theorem
states~\cite{alexandrov}.

Now, the Second Main Theorem of G.~Berck, A.~Bernig and C.~Vernicos in~\cite{berck_Bernig_Vernicos} ---
which encloses former results given by  B.~Colbois and P.~Verovic in~\cite{cpv} ---
asserts that in case $\partial \Omega $ is $C^{1,1}$ we have 
\begin{equation} \label{eq_entropy_coefficient_intro}
  \lim_{r \to +\infty} \frac{\vol_\Omega  B_\Omega(p,r)}{\sinh^{n-1}r}=\frac{1}{n-1} \cA_p(\Omega)\neq 0 
\end{equation}
and $\ent(\Omega)=n-1$ is a limit. Moreover, 
without any assumption on $\Omega $ we have $\lent(\Omega) \geq n-1$ whenever $\cA_p(\Omega) \neq 0$.

The third one --- which is also a rigidity result --- 
requires stronger assumptions about $\Omega$: it has to be divisible, meaning that it 
admits a compact quotient, and its Hilbert metric has to be hyperbolic in 
the sense of Gromov, which implies its boundary is $C^1$ and strictly convex by 
Y.~Benoist in~\cite{be}. Let us stress out that the Hilbert metric on such an $\Omega$ is 
the hyperbolic one if and only if 
$\Omega$ has a $C^{1,1}$ boundary, and that its volume entropy 
is positive since hyperbolicity implies the non-vanishing of the Cheeger constant
(see Theorem 1.5 in B.~Colbois and C.~Vernicos~\cite{ccv}). 
A result by M.~Crampon (see \cite{crampon}) states that 
for a divisible open bounded convex set $\Omega$ in $\R^n$ whose boundary is $C^1$
we have  $\ent(\Omega)\leq n-1$ with equality if and only if $\Omega$ is an ellipsoid.

In the present paper we link the volume entropy to another invariant associated
with a convex body, called the \textsl{approximability}. This name was introduced
by Schneider and Wieacker in~\cite{sw}. The approximability
measures in some sense how well a convex set can be approximated by polytopes. 
More precisely, let $N(\varepsilon,\Omega)$ be the smallest number
of vertices of a polytope whose Hausdorff distance
to $\Omega$ is less than $\varepsilon>0$. Then the lower and upper approximability of 
$\Omega$ are defined by
\begin{equation}
  \underline{a}(\Omega):=\liminf_{\varepsilon \to 0}\dfrac{\ln N(\varepsilon,\Omega)}{-\ln \varepsilon}\text{, and}\quad \overline{a}(\Omega):=\limsup_{\varepsilon \to 0}\dfrac{\ln N(\varepsilon,\Omega)}{-\ln \varepsilon}\text{.}
\end{equation}

The key inequality which is of interest in our work ---
obtained by Fejes-Toth~\cite{fejes} in dimension $2$ and by Bron\u ste\u\i n-Ivanov~\cite{bi}
in the general case --- asserts that for any bounded convex set in $\R^n$ 
the following upperbound on the upper approximability holds  $\overline{a}(\Omega)\leq (n-1)/{2}$.

Our main result is the following one
\begin{theo}[Main theorem]\label{MainTheo}
Given an open bounded convex set $\Omega$ in $\R^n$, we have
\begin{equation}
  \label{eqMainTheo}
  2\underline{a}(\Omega)\leq \lent(\Omega)\text{, and }\ 2\overline{a}(\Omega)\leq \Ent(\Omega)
\end{equation}
with equality for $n=2$ or $n=3$.
\end{theo}
The equality case in Theorem \ref{eqMainTheo} together with the uppebound for the upperapproximability
 imply the following  corollary.
\begin{cor}[Volume entropy upper bound conjecture]\label{UpperBoundConjecture}
  For any open bounded convex set $\Omega$ in $\R^2$ or $\R^3$ we have
$\Ent(\Omega)\leq n-1$.
\end{cor}

The equality case in this main theorem heavily relies on the study of polytopal Hilbert geometries. As it happens
we get an optimal control of the volume of metric balls in dimension two and three for in those two cases 
the number of edges of a polytope is bounded from above by the number of its vertices up to a multiplicative and 
an additive constant.
This does not hold in higher dimension, following McMullen's upper bound theorem~\cite{mcmullen,mcmullens}.

The second important results concerns the  two-dimensional case where we can prove that there are Hilbert geometries
with intermediate volume growth. 
\begin{theo}[Intermediate volume growth]\label{IntermediateVolumeGrowth}
  Let $f:\R^+\to \R^+$ be an increasing function that satisfies
$$
\liminf_{r\to +\infty} \frac{e^r}{f(r)}>0\text{.}
$$
Then there exist an open bounded convex set $\Omega$ in $\R^2$ and a point $o$ in $\Omega$, such that we have
\begin{equation}
  \label{eq:intermediateGrowthone}
  \liminf_{r\to +\infty} \frac{\vol_{\Omega}(B_{\Omega}(o,r))}{f(r)}>0 \text{ and } \limsup_{r\to +\infty} \frac{\vol_{\Omega}(B_{\Omega}(o,r))}{f(r)r^2}< +\infty\text{,} 
\end{equation}
and
\begin{equation}
  \label{eq:intermediateGrowthtwo}
  \begin{split}
    \lent(\Omega)= \liminf_{r\to +\infty} \frac{\ln f(r)}{r}\text{,}\\
 \Ent(\Omega)= \limsup_{r\to +\infty} \frac{\ln f(r)}{r}\text{.} 
  \end{split}
\end{equation}

In particular there are open bounded convex sets $\Omega\subset\R^2$ with 
\begin{itemize}
\item maximal volume entropy and zero centro-projective area,
\item zero volume entropy which are not polytopes.
\end{itemize}
\end{theo}
This theorem is a consequence of our method for proving the equality in dimension two in the Main theorem (see section \ref{lesinegalites})
and Schneider and Wieacker \cite{sw} results on the approximability in dimension two. The last statement follows
from our work \cite{ver09}, where we showed that polytopes have polynomial growth of order $r^2$ in dimension two.

The intermediate volume growth theorem allows us to settle in a quite definite way the question of whether the entropy is a limit or not.
\begin{cor}\label{cor:volentropyone}
The volume entropy is not a limit in general. More precisely,
for any $0\leq \alpha\leq \beta\leq 1$ there exist an open bounded convex set $\Omega$ in $\R^2$ such that 
we have
$$
\lent(\Omega)=\alpha, \qquad \Ent(\Omega)=\beta\text{.}
$$
\end{cor}

The equalities and inequalities also imply the following four new results,
\begin{cor} Given an open bounded convex set $\Omega$ in $\R^n$, we have 
  \begin{itemize}
  \item $d_H\leq\lent(\Omega)$, where $d_H$ is the Hausdorff dimension of
the set of farthest points of $\Omega$.
  \item if $n=2$ or $3$ then $\overline{a}(\Omega)$ is a projective invariant of $\Omega$
and $\Ent(\Omega)=\Ent(\Omega^*)$, where $\Omega^*$ is the polar dual of $\Omega$
  \item if $n=2$, then $\overline{a}(\Omega)\leq \frac1{3-d}$.
  \end{itemize}
\end{cor}

Section \ref{preliminaries} presents the various lemmas and notions needed in section \ref{lesinegalites} 
to prove
the main theorem, and in section \ref{growth} we present the proof of the intermediate volume growth theorem. 

\section{Preliminaries on Hilbert geometries and convex bodies}\label{preliminaries}

\subsection{Notations and definitions}

A \textsl{proper} open set in $\R^n$ is a set that does not contain a whole line.
A non-empty proper open convex set in $\R^n$ will be called a \textsl{proper convex domain}.
The closure of a bounded convex domain is usually called a \textsl{convex body}.

A Hilbert geometry
$(\Omega,d_\Omega)$ is a proper convex domain $\Omega$ in $\R^n$  endowed with
its Hilbert distance 
$d_\Omega$ defined as follows: for any distinct points $p$ and $q$ in $\Omega$,
the line passing through $p$ and $q$ meets the boundary $\partial \Omega$ of $\Omega$
at two points $a$ and $b$, such that $a$, $p$, $q$,
$b$ appear in that order on the line.  We denote by $[a,p,q,b]$ the cross ratio of $(a,p,q,b)$, i.e. 
$$
[a,p,q,b] = \frac{qa}{pa} \times \frac{pb}{qb} > 1,
$$
where for any two points $x$, $y$ in $\R^n$, $xy$ is their distance with 
respect to the standard Euclidean norm $||\cdot||$. Should $a$ or $b$ be at infinity, the corresponding ratio will
be considered equal to $1$.
Then we define 
$$
d_{\Omega}(p,q) = \frac{1}{2} \ln [a,p,q,b].
$$

Note that the invariance of the cross ratio by a projective map implies the invariance 
of $d_{\Omega}$ by such a map.

The proper convex domain $\Omega$ is also naturally endowed with
the  $C^0$ Finsler metric $F_\Omega$ defined as follows: 
given $p \in \Omega$ and $v \in T_{p}\Omega =\R^n$
with $v \neq 0$, the straight line passing through $p$ with direction vector 
$v$ meets $\partial \Omega$ at two points $p_\Omega^{+}$ and
$p_\Omega^{-}$ such that $p_\Omega^{+}-p_\Omega^{-}$ and $v$ have the same direction. 
Then let $t^+$ and $t^-$ be the two positive numbers such
that $p+t^+v=p_\Omega^{+}$ and $p-t^-v=p_\Omega^{-}$ (in other words
these numbers corresponds to the amounts of time needed to reach the boundary of $\Omega$ when starting
at $p$ with the velocities $v$ and $-v$, respectively). Then we define
$$
F_\Omega(p,v) = \frac{1}{2} \biggl(\frac{1}{t^+} + \frac{1}{t^-}\biggr) \quad \textrm{and} \quad F_\Omega(p , 0) = 0.
$$ 
Should $p_\Omega^{+}$ or
$p_\Omega^{-}$ be at infinity, then the corresponding ratio will be taken equal to $0$.

% \begin{figure}[htpb]
%   \centering 
%     \includegraphics[scale=.7]{convex2}
%           \caption{The Finsler structure \label{dintro2}}
%           \end{figure}

The Hilbert distance $d_\Omega$ is the length distance associated to 
$F_\Omega$. We shall denote by $B_\Omega(p,r)$ the metric ball of radius $r$
centred at the point $p\in \Omega$ and by $S_\Omega(p,r)$ the corresponding metric sphere.

Thanks to that Finsler metric, we can make use of two important Borel measures on
$\Omega$. 

The first one, which coincides with the Hausdorff measure associated to the metric 
space $(\Omega,d_\Omega)$, (see  example~5.5.13 in \cite{bbi}),  is the \textsl{Busemann} volume that we  will be denote by $\vol_\Omega$ and is
defined as follows.
Given any point  $p$ in $\Omega$, let $\beta_\Omega(p) = \{v \in \R^n ~|~ F_{\Omega}(p,v) < 1 \}$
be the open unit ball in
$T_{p}\Omega = \R^n$ with respect to the norm $F_{\Omega}(p,\cdot)$ and 
let $\omega_{n}$ be the Euclidean volume of the open unit ball of the standard Euclidean space
$\R^n$.
Then given any Borel set $A$ in $\Omega$, its  Busemann volume $\vol_\Omega$ is defined by
$$
\vol_\Omega(A) = \int_{A} \frac{\omega_{n}}{\lambda\bigl(\beta_\Omega(p)\bigr)} \ed{\lambda(p)}\text{,}
$$
where $\lambda$ denotes the standard Lebesgue measure on $\R^n$.

The second one, is the \textsl{Holmes-Thompson} volume on $\Omega$ that we will 
denote
by $\mu_{HT,\Omega}$. Given  any Borel set $A$ in $\Omega$ its Holmes-Thompson volume is
defined by
$$
\mu_{HT,\Omega}(A)= \int_{A} \frac{\lambda\bigl(\beta_\Omega^*(p)\bigr)}{\omega_n} \ed{\lambda(p)}\text{,}
$$
where $\beta_\Omega^*(p)$ is the polar dual of $\beta_\Omega(p)$.

We can actually consider a whole family of measures as follows.
Let ${\mathcal E}_n$ be the set of pointed proper open convex sets in $\R^n$. These are the pairs $(\omega,x)$ such that
$\omega$ is  a proper open convex set and $x$ is a point in $\omega$. 
We shall say that a function $f\colon {\mathcal E}_n\to \R$
is a \textsl{proper density} if it is positive and satisfies the three 
following properties
\begin{description}
\item[Continuity] with respect to the Hausdorff pointed topology on ${\mathcal E}_n$;
\item[Monotone decreasing] with respect to the inclusion, i.e., if $x\in \omega \subset \Omega$
then $f(\Omega,x)\leq f(\omega,x)$.
\item[Chain rule compatibility:] for any projective transformation $T$ one has
$$
f\bigl(T(\omega),T(x)\bigr) \text{Jac}(T,x)= f(\omega,x)\text{.} 
$$
\end{description}
We will say that $f$  is a \textsl{normalised proper density} if
$f(\omega,x)d\lambda(x)$ is the Riemannian volume when $\omega$ is an ellipsoid.
Let us denote by $\mathcal{PD}_n$  the set of proper densities over ${\mathcal E}_n$. 

A result of Benzecri~\cite{benzecri} states that the action of the group of
projective transformations on ${\mathcal E}_n$ is co-compact. Therefore,
for any pair $f,g$ in $\mathcal{PD}_n$, there exists a constant $C>0$ ($C\geq 1$ for the normalised ones)
such that for any $(\omega,x)\in {\mathcal E}_n$ one has
\begin{equation}
  \label{eqmeasures}
  \frac1C \leq \frac{f(\omega,x)}{g(\omega,x)}\leq C\text{.}
\end{equation}

Given a density $f$ in $\mathcal{PD}_n$ there is a natural Borel measure associated to any open bounded convex set
$\Omega$, denoted by  $\mu_{f,\Omega}$, and defined as follows: for any borel subset $A$ of  $\Omega$ we let  
$$
\mu_{f,\Omega}(A)= \int_{A} f(\Omega,p) \ed{\lambda(p)}\text{.}
$$

Integrating the inequalities (\ref{eqmeasures}) we obtain that  for any two proper densities $f$, $g$
in $\mathcal{PD}_n$, there exists a constant $C>0$ such that for any Borel set $A\subset  \Omega$ we have
\begin{equation}
  \label{eqmeasures2}
  \frac1C \mu_{g,\Omega }(A)\leq \mu_{f,\Omega}(A)\leq C \mu_{g,\Omega }(A)\text{.} 
\end{equation}
We shall call \textsl{proper measures with density} the family of
measures obtained in this way.

To a proper density $g\in \mathcal{PD}_{n-1}$ we can also associate a $(n-1)$-dimensional measure,
denoted by $\mu_{\cdot,g,\Omega}$,
on hypersurfaces in $\Omega$ as follows. Let $\Sigma$ be smooth a hypersurface,
and consider for a point $p$ in the hypersurface $\Sigma$ its tangent hyperplane $H(p)$,
then the measure will be given by 
\begin{equation}
  \label{eqarea}
  \frac{d\mu_{\Sigma,g,\Omega}}{d\sigma}(p) = \frac{d\mu_{g,{\Omega }\cap H(p)}}{d\sigma}(p)\text{.}
\end{equation}
Where $\sigma$ denotes the Hausdorff $n-1$-dimensional measure associated with
the standard Euclidean distance.
In section \ref{lesinegalites} 
we will simply denote respectively  by $\vol_{n-1,\Omega}$ and $\Area_\Omega$ the 
$(n-1)$-dimensional measure associated respectively with the Holmes-Thompson and the Busemann measures.

Let now $\mu_{f,\Omega}$ be  a proper measure with density over $\Omega$,
then the volume entropies of $\Omega$ is defined by
\begin{multline}
  \label{eqvolentropie}
 \lent(\Omega)=\liminf_{r\to +\infty} \dfrac{\ln \mu_{f,\Omega}\bigl(B_\Omega(p,r)\bigr)}{r}\text{, and }\\ \Ent(\Omega)=\limsup_{r\to +\infty} \dfrac{\ln \mu_{f,\Omega}\bigl(B_\Omega(p,r)\bigr)}{r} \text{.}
\end{multline}
These number do not depend on either $f$ nor $p$, and are  equal to the 
spherical entropies (see Theorem 2.14 \cite{berck_Bernig_Vernicos}):
\begin{multline}\label{sphericalentropy}
  \lent(\Omega)=\liminf_{r\to +\infty} \dfrac{\ln \Area_{\Omega}\bigl(S_\Omega(p,r)\bigr)}{r}\text{, and }\\
  \Ent(\Omega)=\limsup_{r\to +\infty} \dfrac{\ln \Area_{\Omega}\bigl(S_\Omega(p,r)\bigr)}{r} \text{.}
\end{multline}

\subsection{Properties of the Holmes-Thompson and the Busemann measures}
\begin{quote}
  We recall some properties of the Holmes-Thompson and the Busemann measure.
\end{quote}

\begin{lema}[Monotonicity of the Holmes-Thompson measure]\label{l:htmonotonicity}
Let $(\Omega, d_\Omega)$ be a Hilbert geometry in $\R^n$. The Holmes-Thompson area measure is
monotonic on the set of convex bodies in $\Omega$, that is, 
  for any $K_1$ and $K_2$ pair of convex bodies in $\Omega$, such that $K_1\subset K_2$ on has
  \begin{equation}\label{htmonotonicity}
    \vol_{n-1,\Omega}(\partial K_1)\leq \vol_{n-1,\Omega}(\partial K_2)\text{.}
  \end{equation}
\end{lema}
\begin{proof}
     If $\partial\Omega$ is $C^2$ with everywhere positive Gaussian curvature then the
tangent unit spheres of the Finsler metric are quadratically convex.

According to \'Alvarez Paiva and Fernandes \cite[theorem 1.1 and remark 2]{alvarez_fernandez} there exists a Crofton
formula for the Holmes-Thompson area, from which the inequality (\ref{htmonotonicity}) follows.

   Such smooth convex bodies are dense in the set of all convex bodies
for the Hausdorff topology. By approximation, it follows that inequality (\ref{htmonotonicity}) is valid for any $\Omega$.
\end{proof}

Lemma~\ref{l:htmonotonicity} associated with 
the Blaschke-Santalo inequality and the inequality~(\ref{eqmeasures2})
 immediately implies the following result (see also \cite[Lemma~2.12]{berck_Bernig_Vernicos}).

\begin{lema}[Rough monotonicity of the Busemann measure]\label{rmonotone}
Let $(\Omega, d_\Omega)$ be a Hilbert geometry, and let $p$ be a point in $\Omega$.
There exists a monotonic function $f_\Omega$ and a constant $C_n<1$ such that for all $r>0$
\begin{equation} \label{eq_comparison_holmes_thompson}
 C_n f_\Omega(r) \leq \Area_\Omega(S_\Omega(p,r)) \leq  f_\Omega(r).
\end{equation}
$f_\Omega(r)$ is the Holmes-Thompson area of the sphere $S_\Omega(p,r)$
\end{lema}

Let us finish by recalling one last statement also proved in \cite[Lemma~2.13]{berck_Bernig_Vernicos}.
\begin{lema}[Co-area inequalities]\label{coarea}
For all $r>0$
\begin{displaymath}
 \frac{1}{2}\frac{\omega_n}{\omega_{n-1}} \Area_\Omega(S_\Omega(p,r)) \leq \frac{\partial}{\partial r}\vol_\Omega(B_\Omega(p,r)) \leq \frac{n}{2}\frac{\omega_n}{\omega_{n-1}} \Area_\Omega(S_\Omega(p,r)).
\end{displaymath}
\end{lema}

\subsection{Upper bound on the area of triangles }

\begin{quote}
  In this section we bound from above independently of the two-dimensional Hilbert
geometries the area of affine triangles which are subset of a metric ball, when
one the vertexes is the centre of that ball. We also give a lower bound
on the length of some metric segments, when their vertexes go to the boundary
of the Hilbert geometry.
\end{quote}

\begin{lema}\label{l:trianglearea}
Let $(\Omega,d_\Omega)$ be a two-dimensional Hilbert geometry. Then there
exists a constant $C$ independent of $\Omega$, such that, for any point $o$ in $\Omega$
and any pair of points $p_\rho$ and $q_\rho$ in the metric ball $B_{\Omega}(o,\rho)$,
the area of the affine triangle $(op_\rho q_\rho)$ is less than $C\rho^2$.   
% Let $\mathcal{P}$ be a convex polygon in $\R^2$ endowed with its Hilbert geometry.  
% Let $o$ be a point in the interior of $\mathcal{P}$ and 
% $p$ and $q$ be two consecutive vertexes. 
% Consider $p_\rho$, $q_\rho$  two points at a distance less than $\rho$ from $o$,
% on the lines $(op)$ and $(oq)$ respectively,
% then the affine triangle $op_\rho q_\rho$ has an area less than , 
% for some universal constant $C$.
\end{lema}

\begin{proof}
Given $p_\rho$ and $q_\rho$ in $B_\Omega(o,\rho)$, let $p$ and $q$ be the intersections
of the boundary $\partial\Omega$ with the half lines $[o,p_\rho)$ and $[o,q_\rho)$ respectively.
Let $p'$ and $q'$ be, respectively, the intersections of 
the half lines $[p_\rho,o)$ and $[q_\rho,o)$ with
the  boundary $\partial\Omega$. 

\begin{figure}[h]
  \centering
  \includegraphics{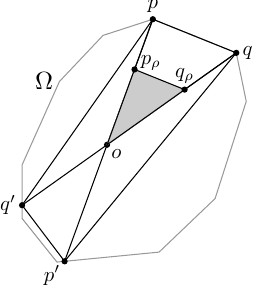}
  \caption{The area of the triangle $(op_\rho,q_\rho )$ is bounded by $C\rho^2$.}
\end{figure}

Then the volume of the triangle $(op_\rho q_\rho)$ with respect to the Hilbert geometry of 
$\Omega$ is less than or equal to its volume with respect to the Hilbert geometry of the
quadrilateral $(pqp'q')$. However, the distances of $p_\rho$ and $q_\rho$ from $o$ 
remain the same in both Hilbert geometries. 

Up to a change of chart, we can suppose
that this quadrilateral is actually a square. This allows us
to use Theorem 1 from~\cite{ver2011} which states that 
the Hilbert geometry of
the square is bi-lipschitz to the product of the Hilbert geometries of its sides, 
using the identity as a map. 
In other words it is bi-lipschitz to the Euclidean plane, with a lipschitz constant
equal to $C_0>1$, independent of our initial conditions.

Therefore our affine triangle is inside a Euclidean  disc of radius $C_0\rho$, 
which implies that its area with respect to the Hilbert geometry of $\Omega$ is less than $C_0^4 \times \pi \times\rho^2$.
\end{proof}

To prove that the volume entropy is bounded from below by the approximabilty we will need
to bound from below the length of certain segments in a given Hilbert geometry $\Omega$. To do so will
compare their length in the initial convex with their length in a convex projectively equivalent to
a triangle, and  containing the initial convex $\Omega$.

Let us make this precise. Consider four points $a$, $b$, $c$ and $d$ in the
Euclidean plane $(\R^2,\langle\cdot\rangle)$ such that $\mathcal{Q}=(abcd)$ is a convex quadrilateral.
We assume that the scalar products $\langle\vec{ab},\vec{bc}\rangle$ and $\langle\vec{bc},\vec{cd}\rangle$ 
are positive and we
let $q$ be the intersection point between the straight lines $(ab)$ and $(cd)$.

Suppose that $\Omega$ is a convex domain such that the segments
 $[a,b]$, $[b,c]$ and $[c,d]$ belong to its boundary.

Given $p$ a point in the convex domain $\Omega$ we denote by $p'$ 
the intersection between the straight line $(pq)$ and the segment $[b,c]$,
and  we define $s={bp'}/{bc}.$
 
We then denote by $[b(r),c(r)]$ the image of the segment $[b,c]$ under the dilation centred at $p$ 
with ratio $0<\tanh(r)<1$. The image of the segment
$[b,c]$ under the dilation centred at $q$ sending $p'$ on $p$ will
be denoted by $[B,C]$.

\begin{figure}[h]
  \centering
  \includegraphics[scale=0.7]{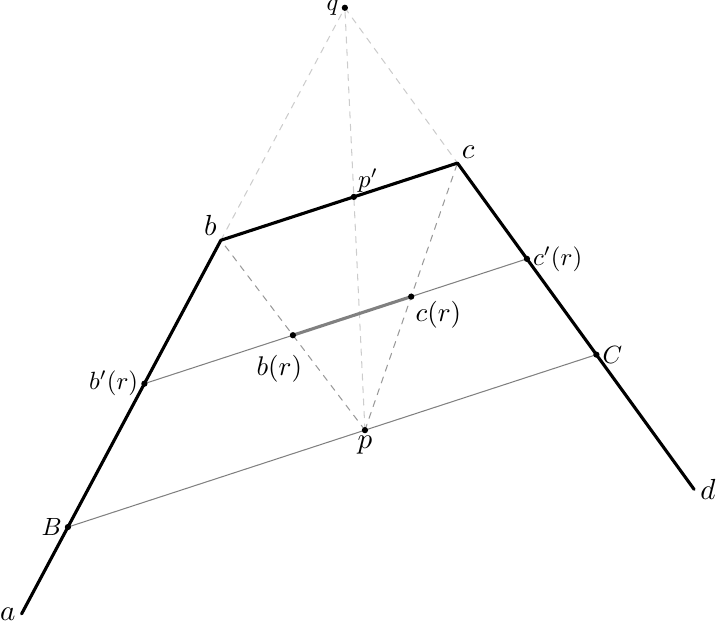}
  \caption{Distance estimate of Claim \ref{lessegments}}
\end{figure}

\begin{claim}
  \label{lessegments}
The following inequality is satisfied under the above assumption:
\begin{equation}\label{minorationlg}
  \begin{split}
     d_\Omega\bigl(b(r),c(r)\bigr) \geq \frac{1}{2}\ln\biggl(\frac{bc}{s\cdot BC}&\frac{\tanh(r)}{1-\tanh(r)}+1\biggr) \\ &+\frac{1}{2} \ln\biggl(\frac{bc}{(1-s)\cdot BC}\frac{\tanh(r)}{1-\tanh(r)}+1\biggr)\text{.}
  \end{split}
\end{equation}
\end{claim}
\begin{proof}
  Straightforward computation, using the fact that the convex domain $\Omega$ is inside the convex $Q$ obtained as the
intersection of the half planes defined by the lines $(ab)$, $(bc)$ and $(cd)$, and therefore
$$
d_\Omega\bigl(b(r0,c(r)\bigr)\geq d_Q\bigl(b(r),c(r)\bigr).
$$

Let  $b'(r)$ be the intersection of the lines $(ab)$ and $\bigl(b(r)c(r)\bigr)$,
and let $c'(r)$ be the intersection of the lines $(cd)$ and $\bigl(b(r)c(r)\bigr)$.
Then we have 
$$
d_Q\bigl(b(r0,c(r)\bigr)=\frac{1}{2}\ln \biggl(\frac{b(r)c'(r)}{c(r)c'(r)}\cdot \frac{c(r)b'(r)}{b(r)b'(r)}\biggr).
$$
Let us focus on the first ratio. On the one hand $b(r)c'(r)=b(r)c(r)+c(r)c'(r)$, and on the second
hand following Thales's theorem
\begin{equation}
  \begin{split}
    &b(r)c(r)=\tanh(r)bc\\
    &c(r)c'(r)=(1-\tanh(r))pC.
  \end{split}
\end{equation}
But $pC=BC\cdot(p'c/bc)=(1-s)BC$, and therefore we obtain
$$
\ln \biggl(\frac{b(r)c'(r)}{c(r)c'(r)}\biggr) = \ln\biggl(\frac{bc}{(1-s)\cdot BC}\frac{\tanh(r)}{1-\tanh(r)}+1\biggr).
$$
The second ratio is treated in the same way.
\end{proof}

\subsection{Intrinsic and extrinsic Hausdorff topologies of Hilbert Geometries}

\begin{quote}
We describe the link between the Hausdorff topology induced by an Euclidean metric
with the Hausdorff topology induced by the Hilbert metric on compact subset of an open
convex set. 
\end{quote}

We recall that the Lowner ellipsoid of a compact set, is the ellipsoid with least volume containing
that set. In this section we will suppose, without loss of generality, that $\Omega$ is a bounded open convex set, whose Lowner
ellipsoid $\mathcal{E}$ is the Euclidean unit ball and $o$ is the center of that ball.
It is a standard result that $(1/n)\mathcal{E}$ is then contained in $\Omega$, i.e., we have the following sequence
of inclusions
\begin{equation}
  \frac{1}{n}\mathcal{E}\subset\Omega\subset\mathcal{E}
\end{equation}

\begin{defi}[Asymptotic ball and sphere]
  We call \textsl{asymptotic ball} of radius $R$ centred at $o$ the image
of $\Omega$ by the dilation of ratio $\tanh R$ centred at $o$, and we denote it by $\asb(o,R)$.
The image of the boundary $\partial \Omega$ by the same dilation will be called the \textsl{asymptotic sphere}
of radius $R$ centred at $o$ and denoted by $AsS(o,R)$.
\end{defi}

Recall that the Hausdorff distance is a distance between non empty compact subsets in a metric space. We shall 
use both the Euclidean and Hilbert distance and we will use the terminology 
\textsl{Haus\-dorff-Euclidean} and \textsl{Haus\-dorff-Hilbert} to distinguish both cases.

We would like to relate the Hausdorff-Hilbert neighbourhoods of the asymptotic ball $\asb(o,R)$ with
its Hausdorff-Euclidean neighbourhoods.

\begin{prop}\label{keyprop}
Let $\Omega$ be a convex domain and let $o$ be the centre of its Lowner ellipsoid, which is supposed to be the unit Euclidean ball.
  \begin{enumerate}
  \item  The $(1-\tanh(R))/2n$-Haus\-dorff-Euclidean neighborhood of the asymptotic ball $\asb(o,R)$ is contained in
its $\bigl((\ln 3)/2\bigr)$-Haus\-dorff-Hilbert neighborhood.
\item For any $K>0$, the $K$-Haus\-dorff-Hilbert neighborhood of the asymptotic ball $\asb(o,R)$ is contained in its $\bigl(1-\tanh(R)\bigr)$-Hausdorff-Euclidean neighborhood.
  \end{enumerate}
\end{prop}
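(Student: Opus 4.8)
The plan is to derive both inclusions from radial and Minkowski‑sum comparisons built on the John‑type position $\frac1n\mathcal E\subset\Omega\subset\mathcal E$ recorded just before the statement, writing $t=\tanh R$ so that $\asb(o,R)=t\Omega$ and $\bar{\asb}(o,R)=t\bar\Omega\subset\Omega$. The decisive quantitative fact, used for the first inclusion, is that \emph{every} point of $\bar{\asb}(o,R)=t\bar\Omega$ lies at Euclidean distance at least $(1-t)/n$ from $\partial\Omega$. Indeed $\tfrac1n\bar{\mathcal E}\subset\bar\Omega$ gives $t\bar\Omega+\tfrac{1-t}{n}\bar{\mathcal E}\subset t\bar\Omega+(1-t)\bar\Omega=\bar\Omega$, the last equality being the convexity identity $s\bar\Omega+(1-s)\bar\Omega=\bar\Omega$; so a Euclidean ball of radius $(1-t)/n$ about any point of $t\bar\Omega$ stays in $\bar\Omega$.

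For the first inclusion I would take $x$ in the $(1-\tanh R)/2n$‑Hausdorff–Euclidean neighborhood of $\asb(o,R)$, dispose of the trivial case $x\in\asb(o,R)$, and otherwise let $z\in\partial\asb(o,R)=t\partial\Omega\subset\Omega$ be a Euclidean nearest point, so $\ell:=\|x-z\|\le (1-t)/2n$. On the line through $x$ and $z$, meeting $\partial\Omega$ at $a$ (backward from $z$ at distance $d^-_z$) and $b$ (forward, past $x$, at distance $d^+_z$ from $z$), the estimate above gives $d^-_z,d^+_z\ge (1-t)/n$, hence $\ell/d^\pm_z\le \tfrac{(1-t)/2n}{(1-t)/n}=\tfrac12$, and the cross ratio defining the Hilbert distance is
$$[a,z,x,b]=\Bigl(1+\frac{\ell}{d^-_z}\Bigr)\frac{1}{1-\ell/d^+_z}\le\Bigl(1+\tfrac12\Bigr)\cdot\frac{1}{1-\tfrac12}=3.$$
Therefore $d_\Omega(x,z)\le\tfrac12\ln 3$, so $x$ lies in the $(\ln 3)/2$‑Hausdorff–Hilbert neighborhood of $\asb(o,R)$, as required.

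The second inclusion I expect to be essentially free, and independent of $K$. Every point of the $K$‑Hausdorff–Hilbert neighborhood of $\asb(o,R)$ lies in $\Omega$ by the very definition of $d_\Omega$, so it suffices to check that $\Omega$ itself sits in the $(1-t)$‑Euclidean neighborhood of $\asb(o,R)$. Writing $x=\rho u$ with $u$ a unit vector and $\rho_+(u)$ the distance from $o$ to $\partial\Omega$ along $u$: if $x\notin\asb(o,R)$ then $t\rho_+(u)\le\rho\le\rho_+(u)$, and the radial exit point $t\rho_+(u)\,u\in\bar{\asb}(o,R)$ satisfies $\|x-t\rho_+(u)\,u\|=\rho-t\rho_+(u)\le(1-t)\rho_+(u)\le 1-t$, using $\rho_+(u)\le 1$ because $\Omega\subset\mathcal E$; if $x\in\asb(o,R)$ the Euclidean distance is $0$. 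This yields the claimed containment.

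The main obstacle is the sharp constant in the first part: producing exactly $3$ in the cross ratio forces the neighborhood radius $(1-t)/2n$ to be precisely half of the uniform lower bound $(1-t)/n$ for $d(z,\partial\Omega)$, and that lower bound is exactly where the Löwner‑ellipsoid hypothesis enters, through $\tfrac1n\mathcal E\subset\Omega$ together with the Minkowski identity. One must also verify that the Euclidean nearest point $z$ genuinely lies in $\bar{\asb}(o,R)\subset\Omega$, so that the boundary estimate applies to it and $d_\Omega(x,z)$ is defined; granting this, the remaining steps are a direct manipulation of the cross ratio.
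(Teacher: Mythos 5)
Your proof is correct and follows essentially the same route as the paper: both exploit the L\"owner inclusion $\tfrac1n\mathcal{E}\subset\Omega$ to show that every point of $\asb(o,R)$ lies at Euclidean distance at least $(1-\tanh R)/n$ from $\partial\Omega$, convert Euclidean proximity $(1-\tanh R)/2n$ into Hilbert distance $\tfrac12\ln 3$, and reduce the second claim to the fact that $\Omega$ itself lies in the $(1-\tanh R)$-Euclidean neighborhood of $\asb(o,R)$. The only differences are in packaging: the paper gets the inner-distance bound from a cone over $\tfrac1n\mathcal{E}$ with apex at a boundary point and passes to Hilbert distance by comparison with the inscribed Euclidean ball's own (hyperbolic) Hilbert metric, whereas you use the Minkowski-sum identity $t\bar\Omega+(1-t)\bar\Omega=\bar\Omega$ and a direct cross-ratio estimate \EMdash{} the same computation in slightly different form.
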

\begin{proof}
  For any point $p\in \partial\Omega$ on the boundary of $\Omega$, and for $0<t<1$ let $\varphi_t(p)=o+t\cdot\overrightarrow{op}$. This map sends
$\partial\Omega$ bijectively on the asymptotic sphere  $AsS(o,\arctanh t)$ centred at $o$ with radius $\arctanh t$.

\medskip
\noindent\textsl{Proof of part (i) of the Proposition:}

Any point of a compact set in the $(1-\tanh(R))/2n$-Hausdorff-Euclidean neighborhood of $\asb(o,R)$, either lies inside
$\asb(o,R)$, or is contained in an Euclidean ball of radius $(1-\tanh(R))/2n$ centred on a point of $\asb(o,R)$.

We recall that the ball of radius $1/n$ is a subset of $\Omega$, and thus so is the ball of radius $1/2n$, that is
$$
\frac{1}{2n}\mathcal{E}\subset\frac{1}{n}\mathcal{E}\subset\Omega\text{.}
$$ 
Let $p\in \partial\Omega$ be a point on the boundary. 
By convexity, the interior of $K(p)$ the convex hull 
of $p$ and 
$1/n\mathcal{E}$ is a subset of $\Omega$ --- 
it is the projection of a cone of basis $1/n\mathcal{E}$. 
Hence $\mathcal{E}_{p,\alpha}$, the
image of $1/n\mathcal{E}$ by the dilation of ratio $0<\alpha<1$ centred at $p$, lies in the "cone" $K(p)$.
The set $\mathcal{E}_{p,\alpha}$ is therefore an Euclidean ball of radius $\alpha/n$ centred at $\varphi_{1-\alpha}(p)$, and it is a subset of $\Omega$.

A point in the Euclidean ball of radius
$\alpha/2n$ centred at $\varphi_{1-\alpha}(p)$ is at a distance less or equal to $1/2\ln 3$ from $\varphi_{1-\alpha}(p)$ with respect to the Hilbert distance of $\mathcal{E}_{p,\alpha}$.

Now a standard comparison arguments states that for any two points $x$ and $y$ in $\mathcal{E}_{p,\alpha}\subset\Omega$
the following inequality occurs
$$
d_\Omega(x,y)\leq d_{\mathcal{E}_{p,\alpha}}(x,y)\text{.}
$$
From this inequality it follows that 
any point in the Euclidean ball of radius $\alpha/2n$ centred at $\varphi_{1-\alpha}(p)$ 
is in the Hilbert metric ball centred at $\varphi_{1-\alpha}(p)$ of radius $1/2\ln3$.

Now for any $1\geq\alpha>1-\tanh R$, the Euclidean ball of radius $\alpha/2n$ contains the Euclidean ball of radius $(1-\tanh R)/2n$.

This implies that for any point $x$ in the asymptotic ball $\asb(o,R)$, the Euclidean ball of radius $(1-\tanh R)/2n$ centred at $x$ is contained in the Hilbert ball of radius $1/2\ln 3$ centred at the $x$, which allows us
to obtain the first part of our claim. 
  
\begin{figure}[hbtp]
  \centering
  \includegraphics{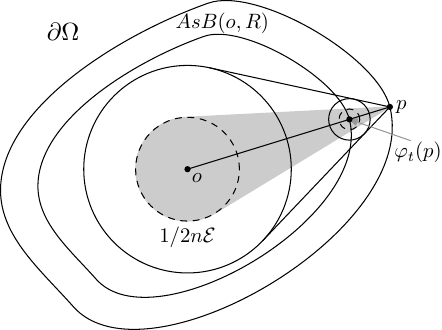}
  \caption{Illustration of  Proposition \ref{keyprop}'s proof}
\end{figure}

\bigskip
\noindent\textsl{Proof of part (ii) of the Proposition:}
This follows from the fact that under our assumptions, $\Omega$ itself is in the $(1-\tanh R)$ Hausdorff-Euclidean
neighborhood of the asymptotic ball $\asb(o,R)$.
\end{proof}
 
\begin{cor}\label{keycor}
Let $\Omega$ be a convex domain and let $o$ be the centre of its Lowner ellipsoid, which is supposed to be the unit Euclidean ball.
  \begin{enumerate}
  \item  The $(1-\tanh(R+\ln 2))/2n$-Hausdorff-Euclidean neighborhood of $B(o,R)$ is contained in
its $\ln \bigl(3(n+1)\bigr)$-Hausdorff-Hilbert neighborhood.
\item For any $K>0$, the $K$-Hausdorff-Hilbert neighborhood of $B(o,R)$ is contained in its $\Bigl(1-\tanh\bigl(R+K-\ln(n+1)\bigr)\Bigr)$-Hausdorff-Euclidean neighborhood.
  \end{enumerate}
\end{cor}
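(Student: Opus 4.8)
The plan is to deduce both inclusions from Proposition \ref{keyprop} by squeezing the Hilbert ball $B(o,R)$ between two asymptotic balls; write $\mathcal{N}^E_\varepsilon(\cdot)$ and $\mathcal{N}^H_\varepsilon(\cdot)$ for the $\varepsilon$-neighborhoods in the Euclidean and in the Hilbert metric. The starting point is the inclusion $\tfrac1n\mathcal{E}\subset\Omega\subset\mathcal{E}$: in the direction of a unit vector $u$ the two boundary points of $\Omega$ sit at Euclidean distances $\rho^+(u)$ and $\rho^-(u)$ from $o$, both lying in $[1/n,1]$, and a direct cross-ratio computation shows that the Hilbert sphere $S(o,R)$ meets the ray $o+\R^+u$ at Euclidean distance
\[
s(R,u)=\rho^+(u)\,\frac{e^{2R}-1}{\rho^+(u)/\rho^-(u)+e^{2R}}.
\]
Since the anisotropy ratio $\rho^+(u)/\rho^-(u)$ ranges in $[1/n,n]$, while the asymptotic sphere of radius $\rho$ meets the same ray at $\tanh(\rho)\,\rho^+(u)$, comparing these two radial profiles ray by ray (everything is star-shaped about $o$) yields a sandwich $\asb(o,R-c^-_n)\subset B(o,R)\subset\asb(o,R+c^+_n)$. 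In particular a short monotonicity check gives the clean containment $B(o,R)\subset\asb(o,R+\ln 2)$, which is where the constant $\ln 2$ of the first item originates.

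For the first item, fix $x$ in the $\tfrac{1-\tanh(R+\ln 2)}{2n}$-Hausdorff-Euclidean neighborhood of $B(o,R)$. Because $B(o,R)\subset\asb(o,R+\ln 2)$ and Euclidean neighborhoods are monotone under inclusion, $x$ lies in the same Euclidean neighborhood of $\asb(o,R+\ln 2)$, so Proposition \ref{keyprop}(1) applied at radius $R+\ln 2$ places $x$ in $\mathcal{N}^H_{(\ln 3)/2}(\asb(o,R+\ln 2))$. It then remains to travel back from $\asb(o,R+\ln 2)$ to $B(o,R)$ in Hilbert distance: for $y\in\asb(o,R+\ln 2)$ the radial formula above bounds $d_\Omega(o,y)-R$, and projecting $y$ radially onto $S(o,R)$ shows that $y$ is within Hilbert distance $\tfrac12\ln(3(n+1))$ of $B(o,R)$. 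The triangle inequality then yields $d_\Omega(x,B(o,R))\le \tfrac{\ln 3}{2}+\tfrac12\ln(3(n+1))\le \ln(3(n+1))$, which is the first claim.

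For the second item, the key simplification is that the Hilbert neighborhood of a Hilbert ball is again a Hilbert ball: since geodesics issued from $o$ are Euclidean segments and the nearest point of $B(o,R)$ to an exterior point is its radial projection onto $S(o,R)$, one has $\mathcal{N}^H_K(B(o,R))=B(o,R+K)$. Thus it suffices to estimate, in each direction $u$, the Euclidean gap $s(R+K,u)-s(R,u)$ between the two metric spheres and to compare it with the target radius $1-\tanh(R+K-\ln(n+1))=\tfrac{2(n+1)^2}{e^{2(R+K)}+(n+1)^2}$. Feeding the radial formula into this gap, and using $\rho^+\le 1$ together with the extreme anisotropy $\rho^+/\rho^-=n$, produces precisely the additive shift $\ln(n+1)$; this is the metric-ball analogue of Proposition \ref{keyprop}(2), the constant $\ln(n+1)$ now recording the worst ratio of the two boundary distances.

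The main obstacle is the sandwich of the first paragraph. Unlike the asymptotic ball, whose radial profile is a fixed multiple of $\Omega$, the Hilbert ball's profile $s(R,u)$ mixes both boundary distances $\rho^\pm(u)$, so the comparison with asymptotic balls is genuinely direction-dependent, and the whole content lies in controlling it through the single scalar bound $\rho^+/\rho^-\in[1/n,n]$. Once this is in hand the two items are bookkeeping, namely carrying the additive constants $\ln 2$ and $\ln(n+1)$ through the monotonicity of neighborhoods and Proposition \ref{keyprop}.
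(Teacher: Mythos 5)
Your radial formula and the resulting sandwich are correct, and they reproduce exactly the paper's inclusions (\ref{inclusions}), namely $B(o,R)\subset\asb(o,R+\ln 2)$ and $\asb(o,R)\subset B\bigl(o,R+\ln(n+1)\bigr)$, which the paper obtains by the same cross-ratio estimate using $1/n\le ox,\ ox^*\le 1$. Your treatment of item (1) then closes, with one caveat: the constant you assert --- that every $y\in\asb(o,R+\ln 2)$ satisfies $d_\Omega\bigl(y,B(o,R)\bigr)\le\frac12\ln\bigl(3(n+1)\bigr)$ --- genuinely requires the sharp radial computation $d_\Omega(o,y)-R\le\frac12\ln\bigl((n+1)(2+\tfrac12e^{-2R})\bigr)$; with the cruder bound $d_\Omega(o,y)\le R+\ln 2+\ln(n+1)$ coming from the inclusions alone, your triangle inequality gives $\frac12\ln 3+\ln\bigl(2(n+1)\bigr)=\ln\bigl(2\sqrt3(n+1)\bigr)>\ln\bigl(3(n+1)\bigr)$, so this check must be displayed, not asserted.

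The genuine gap is item (2). You reduce it, correctly, to the directional inequality $s(R+K,u)-s(R,u)\le 1-\tanh\bigl(R+K-\ln(n+1)\bigr)$, and then claim that the computation ``produces precisely the additive shift $\ln(n+1)$''. It does not, and cannot: writing $A=e^{2(R+K)}$, $B=e^{2R}$, $q=\rho^+/\rho^-$, your own formula gives $s(R+K,u)-s(R,u)=\rho^+(A-B)(q+1)/\bigl((q+A)(q+B)\bigr)$, which as $K\to\infty$ tends to the positive constant $\rho^+(q+1)/(q+B)$ --- the Euclidean width of $\Omega\setminus B(o,R)$ along the ray $u$ --- while your target $2(n+1)^2/\bigl(A+(n+1)^2\bigr)$ tends to $0$. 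In fact item (2) as literally stated is false: in the unit disk ($n=2$), $\mathcal{N}^H_K\bigl(B(o,R)\bigr)$ is the Euclidean disk of radius $\tanh(R+K)$, which for large $K$ is not contained in the disk of radius $\tanh R+1-\tanh(R+K-\ln 3)$. What the paper's inclusions actually deliver is either the $K$-independent statement $\mathcal{N}^H_K\bigl(B(o,R)\bigr)\subset\Omega\subset\mathcal{N}^E_{1-\tanh(R-\ln(n+1))}\bigl(B(o,R)\bigr)$, since $\asb\bigl(o,R-\ln(n+1)\bigr)\subset B(o,R)$ and every point of $\Omega$ lies within Euclidean distance $1-\tanh\rho$ of $\asb(o,\rho)$, or else, with the stated constant, the containment in the opposite direction, $\Omega\subset\mathcal{N}^E_{1-\tanh(R+K-\ln(n+1))}\bigl(\mathcal{N}^H_K(B(o,R))\bigr)$, because $\mathcal{N}^H_K\bigl(B(o,R)\bigr)=B(o,R+K)\supset\asb\bigl(o,R+K-\ln(n+1)\bigr)$. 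Your sandwich proves both corrected statements in one line; but the step you dismiss as bookkeeping is exactly where the argument --- and the statement you committed to proving --- breaks down.
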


The proof of this corrollary is a straightforward consequence of the following lemma
applied to the conclusion of the Proposition~\ref{keyprop}.
\begin{lema}\label{keyrmk}
Let $\Omega$ be a convex domain, and suppose that $o$ is a point in the interior
of $\Omega$ such that the unit Euclidean open ball centred at $o$ contains $\Omega$,
and $\Omega$ contains the Euclidean closed ball centred at $o$ of radius $1/(2n)$. 
Then we have
   \begin{equation}\label{inclusions}
    \begin{split}
      B(o,R)\subset\asb(o,R+\ln 2)\text{, and}\\
      \asb(o,R)\subset B\bigl(o,R+\ln(n+1)\bigr). 
    \end{split}
  \end{equation}
\end{lema}
This lemma is a refinement of a result of \cite{cpv} in our case.

\begin{proof}[Proof of Lemma \ref{keyrmk}]
Let $x$ be a point on the boundary $\partial\Omega$ of $\Omega$, and let $x^*$ be the second intersection of the straight line $(ox)$ with $\partial\Omega$.
Then our assumption implies the next two inequalities.
\begin{equation}\label{insidelowner}
  \begin{split}
    1/2n< xo\leq1\\
 1/2n< ox^*\leq1 
  \end{split}
\end{equation}

Actually the first inclusion is always true. Indeed suppose $y$ is on the half line $[ox)$ such that
$d_\Omega(o,y)\leq R$ which in other words implies that we have
$$
\frac{ox}{yx}\frac{yx^*}{ox^*}\leq e^{2R}
$$
therefore
$$
ox\leq e^{2R} \frac{ox^*}{yx^*} (ox-oy) \leq e^{2R} (ox-oy)
$$
which implies in turn that
$$
oy\leq \frac{e^{2R}-1}{e^{2R}} ox \leq \bigl(1-e^{-2R}\bigr) ox\leq \tanh(R+\ln 2) ox\text{.}
$$

Now regarding the second inclusion: consider $y$ a point on the half line $[ox)$  such
that $oy\leq \tanh(R) ox$. On the one hand we have
$$
\frac{ox}{yx}=\frac{ox}{ox-oy}\leq \frac1{1-\tanh(R)}=\frac{e^{2R}+1}2\text{.}
$$
and, on the other hand thanks to the inequalities (\ref{insidelowner}) we get
\begin{equation}
  \label{eqincone}
  \frac{yx^*}{ox^*}\leq\frac{ox+ox^*}{ox^*}\leq 1 +\frac{ox}{ox^*}\leq1 + 2n\text{,}
\end{equation}
which implies that
\begin{equation}
  \label{eqinctwo}
\frac{ox}{yx}\frac{yx^*}{ox^*}\leq\frac{e^{2R}+1}{2}(1+2n)\leq (1+2n)e^{2R} \leq(1+n)^2e^{2R}\text{.}
\end{equation}
The conclusion follows.
\end{proof}

% \begin{rmq}
%   Notice that proposition \ref{keyprop} is still valid if we only assume $\Omega$ to contain a ball of radius $1/n$
% centred at $o$. In the same way, corollary \ref{keycor} and the inclusions (\ref{inclusions}) will still be true if $\Omega$ contains the ball of radius $1/2n$ centred at $o$
% and is included in the ball of radius 
% $1$ centred at the same point $o$, because one needs only to replace $n$ by $2n$ in the inequalities (\ref{eqincone}) and
% (\ref{eqinctwo}) and remark that $(1+2n)\leq (1+n)^2$. 
% \end{rmq}

\subsection{Distance function to a sphere  in a Hilbert geometry}\label{Adistanboule}
This section is an adaptation in the realm of Hilbert geometries of a result concerning the spheres in a Minkowski space
provided to the author by A. Thompson~\cite{essay17}.

Let us first start by recalling the following important fact regarding the
distance of a point to a geodesic in a Hilbert geometry (see Busemann~\cite{busemann}, chapter II, section 18, page 109):

\begin{prop}\label{peakless}
  Let $(\Omega,d_\Omega)$ be a Hilbert Geometry. 
The distance function of a straight geodesic (that is given by an affine line) 
to a point is a peakless
function, i.e., if $\gamma\colon [t_1,t_2]\to \Omega$ is a geodesic segment,
then for any $x\in \Omega$ and $ t_1\leq s\leq t_2$ one has
$$
d_\Omega\bigl(x,\gamma(s)\bigr)\leq \max\biggl\{d_\Omega\bigl(x,\gamma(t_1)\bigr),d_\Omega\bigl(x,\gamma(t_2)\bigr)\biggr\}\text{.} 
$$
\end{prop}

Let us now turn our attention to metric spheres in a two dimensional Hilbert
geometry.

\begin{prop}\label{spheremonotonic}
  Let $(\Omega,d_\Omega)$ be a two dimensional Hilbert Geometry. 
Suppose $o$ is a point of $\Omega$, and  $p$ and $q$ are two points on 
the intersection of the metric
sphere $S(o,R)$ centred at $o$ and radius $R$ with a line passing by $o$.
If $C$ denotes one of the arcs of the sphere $S(o,R)$ from $p$ to $q$,
then for any point $p'$ on the half line $[o,p)$, the function
$\varphi(x)=d_\Omega(p',x)$ is monotonic on $C$.
\end{prop}
\begin{proof}
  Let $p,x,y,q$ be points on that order on $C$. We have to show
that 
$$
d_\Omega(p',x)\leq d_\Omega(p',y)\text{.}
$$

\begin{figure}[h]
$$  \includegraphics[scale=.7]{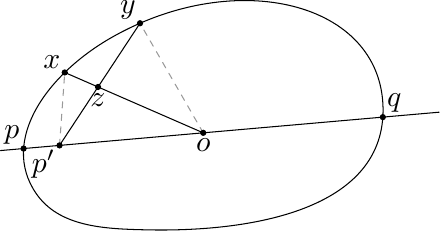}
  \includegraphics[scale=.7]{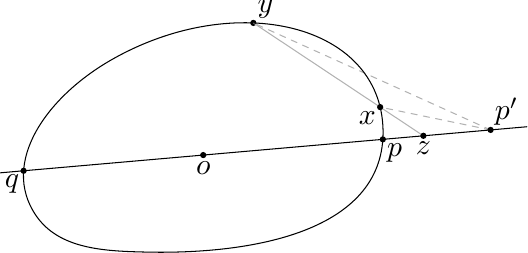}$$
 \caption{Monotonicity of the distance of a point to a sphere}
\end{figure}
Suppose first that that the line segments $[o,x]$ and $[p',y]$ 
intersects at a point $z$.
Hence we have
\begin{eqnarray*}
  d_\Omega(o,x)+d_\Omega(p',y)&=&\bigl(d_\Omega(o,z)+d_\Omega(z,x)\bigr) + \bigl(d_\Omega(p',z)+d_\Omega(z,y)\bigr)\\
                 &=& \bigl(d_\Omega(p',z)+d_\Omega(z,x)\bigr) + \bigl(d_\Omega(o,z)+d_\Omega(z,y)\bigr)\\
                 &\geq&  d_\Omega(p',x)+d_\Omega(o,y)\text{.} 
\end{eqnarray*}
now, as $d_\Omega(o,y)=d_\Omega(o,x)=R$, the result follows.

\medskip

Suppose now that $[o,x]$ and $[p',y]$ do not intersect, which implies
that $p'$ is outside the ball $B(o,R)$. Then the line $(yx)$
intersects $(op)$ at $z$. Because $x$ and $y$ lie on the sphere of radius $R$,
$d_\Omega(o,z)>R$.
Also, as $p$ is one of the nearest points to $p'$ on $C$, we have
$d_\Omega(p',z)\leq d_\Omega(p',p)\leq d_\Omega(p',y)$
Hence if apply the proposition \ref{peakless} to the segment
$[z,y]$ and $p'$, as $x\in [z,y]$ we get
$$
d_\Omega(p',x)\leq \max\bigl\{d_\Omega(p',z),d_\Omega(p',y) \bigr\} = d_\Omega(p',y). 
$$

\end{proof}

\section{Volume entropy and approximability}\label{lesinegalites}

\begin{quote}
 This section is devoted to the proof of the main theorem. This is done in two steps.
The first step consists in bounding the entropy from above in dimension 2 and 3 by the approximability
thanks to the study of the volume growth in polytopes. 
The second step is to bound from below the
entropy. 
This is done by exhibiting a separated subset of the Hilbert geometry 
whose growth
is bigger than the approximability. 
We conclude this section with the various corollaries implied.
\end{quote}

\begin{theo}\label{pdmajorant}
Let $\Omega$ be a bounded convex domain in $\R^2$ or $\R^3$.
The double of the approximabilities of $\Omega$ are bigger than the volume entropies, \textsl{i.e.},
$$
\lent(\Omega)\leq 2\underline{a}(\Omega)\text{, and } \Ent(\Omega)\leq 2\overline{a}(\Omega)\text{.}
$$  
\end{theo}

The proof of this theorem relies on the following stronger statement which is a sort of uniform bound
on the volume of metric balls and metric spheres in a polytopal Hilbert geometry.  The key fact is
that this bound depends, in a coarse sense, linearly on the number of verticies of the polytope.

\begin{theo}\label{growthpolytope}
Let $n=2$ or $n=3$. 
There are affine maps $a_n,b_n$ from $\R\to \R$ and  polynomials $q_n,p_{n-1}$ of degree $n$ and $n-1$ such that for any 
 open convex polytope ${\mathcal P}_N$ with $N$ vertices inside the unit Euclidean ball 
of $\R^n$ and  containing the ball of radius $1/2n$, one has
\begin{equation}\label{growthcontrol}
  \begin{array}{rcl}
\vol_{n-1,{\mathcal P}_N}S_{{\mathcal P}_N}(o,R)&\leq& a_n(N)p_{n-1}(R)\\
    \vol_{{\mathcal P}_N}B_{{\mathcal P}_N}(o,R)&\leq& b_n(N)q_n(R)\text{.} 
  \end{array}
\end{equation}
The same result holds for the asymptotic balls.
\end{theo}

Let us stress out that our method also yields a control in terms of the vertices in higher dimension as well, 
using the so called upper bound conjecture proved by McMullen \cite{mcmullen, mcmullens},
but alas a polynomial of degree strictly bigger than $1$ replaces the affine functions $a_n$ and $b_n$. 
This is why we can't state the equality in the Main Theorem in higher dimensions. 

Notice that this theorem is still valid if we replace the Hausdorff measures by
any measures defined by a  pair of  proper densities $f\in \mathcal{PD}_n$ and $g\in \mathcal{PD}_{n-1}$.
The change of measures will only impact the values of the constants. 

\begin{proof}[Proof of theorem \ref{growthpolytope}]
We will have to deal with the dimension two and the
dimension three separately, even if both cases follow the same main steps.

The first step of our proof 
consists in proving the first inequality of (\ref{growthcontrol}) for the Holmes-Thompson measure 
and for an asymptotic sphere. The uniform inclusion of metric balls into asymptotic balls~(\ref{inclusions}) 
imply then the result for the metric spheres thanks to the monotonicity of the Holmes-Thompson measure lemma~\ref{l:htmonotonicity}.

The second step is an integration using the co-area inequality~(\ref{withcoarea}), wich allows us to get the
second inequality of (\ref{growthcontrol}) for metric balls with respect to the Buseman measure.

Let us now make all this more precise.
We fix a Polytope ${\mathcal P}_N$ with $N$ verticies and for any real $R>0$ we
let $P_R$ be the asymptotic ball of radius $R$ centred at $o$, and let  $\partial P_R$ be the associated asymptotic sphere. 
We also introduce the constant  $c_n=\ln(n+1)$.

\bigskip

\noindent\textsl{Two dimensional case:} 
The idea is to found an upper bound on the length of each eadge of the asymptotic sphere $\partial P_R$,
depending only on $R$.

To do so, we can use the fact that eadge edge belongs to the triangle defined by joining its extremities to
the point $o$. Hence, thanks to the triangular inequality its length is less than the sum of these
two other segments. However, using the second inclusion (\ref{inclusions}) of lemma \ref{keyrmk},
we know that the asymptotic ball $P_R$ is inside the Hilbert ball of radius $R+c_2$ centred at $o$ of
the convex polygon $\mathcal{P}_N$. Hence the length of each edge is less than $2\cdot(R+c_2)$.
Therefore the length of the polygon $\partial P_R$ is less than $N\cdot 2\cdot(R+c_2)$.

Following the first inclusion~(\ref{inclusions}) of Lemma~\ref{keyrmk},
the metric ball of radius $r$ centred at $o$ is a subset of the asymptotic ball of radius $r+\ln 2$
centred at $o$. Therefore, we can 
 use the monotonicity of the Holmes-Thompson length (see Lemma~\ref{l:htmonotonicity}) to get 
for all $r>0$,
\begin{equation}\label{beforecoarea}
  \text{length}_{\mathcal{P}_N}(S_{\mathcal{P}_N}(o,r))\leq 
\text{length}_{\mathcal{P}_N}(\partial P_{r+\ln(2)})\leq N\cdot 2(r+\ln 2+c_2)\text{.}
\end{equation}
Now using the co-area inequality of Lemma~\ref{coarea}, taking into
account that the Busemann length is equal to the Holmes-Thompson length one gets
\begin{equation}
  \label{withcoarea}
\frac {\partial}{\partial r} \vol_{\mathcal{P}_N}(B_{\mathcal{P}_N}(o,r)) \leq  \frac{\pi}{4}\cdot N\cdot 2(r+\ln 2+c_2)\text{.}
\end{equation}
Hence, integrating the  inequality~(\ref{withcoarea}) over the interval $[0,R]$,
we finally obtain the following inequality for the ball of radius $R>0$
\begin{equation}\label{aftercoarea}
  \vol_{\mathcal{P}_N}(B_{\mathcal{P}_N}(o,R))\leq\frac{\pi}{4}\cdot N\cdot (R^2+2(\ln 2+c_2)R)\text{.}
\end{equation}
The inequalities~(\ref{beforecoarea}) and (\ref{aftercoarea}) are the expected
results in dimension two.

\bigskip

\noindent\textsl{Three dimensional case:} 
Once again the idea is to found an upper bound on the area of faces of the asymptotic sphere $\partial P_R$.
Alas, contrary to the two dimensional cases, there is not a unique type of faces, and is therefore
pointless to look for an upper bound depending only on the radius $R$.

However, each face can be seen as the basis of a pyramid with apex the point $o$. All other faces
are then triangles, whose areas can be bounded thanks to the lemma~ \ref{l:trianglearea}.
The analog of the triangle inequality is available in the form of the  minimality of the  Holmes-Thompson area (see Berck~\cite{berck}). In other words, the Holmes-Thompson area of each face of $\partial P_R$  
is less than the sum of the Holmes-Thompson areas of 
the triangles obtained as the  convex hull of $o$ and an edge of the given face of $\partial P_R$. Let us call $\mathcal{T}_o$
such a triangle (the subscript $o$  is to stress the fact that the point $o$ is one of its verticies)

\smallskip
To bound the area of the triangle $\mathcal{T}_o$ it suffices to focus on the intersection of the polytope 
 ${\mathcal P}_N$ with the affine plane containing the triangle $\mathcal{T}_o$. This is a polygon $\tilde P$,
to which we can apply the
Lemma \ref{l:trianglearea} which bounds from above the area of a two dimensional triangles inside a
 metric ball centred on one of its vertex.  Which is exactly the situation of our triangle  $\mathcal{T}_o$
with respect to the Hilbert geometry associated to the polygon $\tilde P$. Indeed it is included in
the asymptotic ball of radius $R$, and again thanks to lemma \ref{keyrmk} we know that
it is inside the metric ball of radius $R+c_3$ with respect to Hilbert geometry of $\mathcal{P}_N\in \R^3$.
As $\tilde P$ is a plane section of $\mathcal{P}_N\in \R^3$, this still holds for $\mathcal{T}_o$ seen
as a subset of $\tilde P$. Hence Lemma \ref{l:trianglearea}
implies that the area of the triangle $\mathcal{T}_o$ is less than $C(R+c_3)^2$, 
for some constant $C>1$ independent of $R$.

\smallskip 

Therefore, if $e(N)$ is the number of edges of $\mathcal{P}_N$,
the area of the asymptotic sphere $\partial P_R$ is less than $2e(N)C(R+c_3)^2$. 

Let $f(N)$ be the number of faces of $\mathcal{P}_N$ and let us 
recall Euler's formula:
$$
N-e(N)+f(N)=2\text{.}
$$
Each face being surrounded by at 
least three edges and each edge belonging to two
faces, one has the classical inequality  
(where equality is obtained in a simplex),  
$$
3f(N)\leq 2e(N)\text{.} 
$$
Combining the previous two inequalities we get a linear upper 
bound of the number of edges by the number of vertexes as follows:
$$
2\leq N-(1/3)e(N) \Rightarrow e(N)\leq 3N-6\text{.} 
$$
Hence the area of of the asymptotic sphere $\partial P_R$ is less than $(3N-6)\cdot 2C\cdot(R+c_3)^2$.

We can now conclude almost as in the two dimensional case.
Following the first inclusion~(\ref{inclusions}) of Lemma~\ref{keyrmk},
the metric ball of radius $r$ centred at $o$ is a subset of the asymptotic ball of radius $r+\ln 2$
centred at $o$. 
Therefore, we can 
 use the monotonicity of the Holmes-Thompson area measure (see Lemma~\ref{l:htmonotonicity}) to get 
for all $r>0$,
\begin{equation}\label{majoairesphere-HT}
  \vol_{2,\mathcal{P}_N}(S_{\mathcal{P}_N}(o,r)) \leq\vol_{2,\mathcal{P}_N}(\partial P_{r+\ln2})\leq (3N-6)2C(r+\ln2+c_3)^2\text{.}
\end{equation}
Notice that this inequality~(\ref{majoairesphere-HT}) corresponds to the first part of the 
inequality~(\ref{growthcontrol}).

The rough monotonicity  of the Busemann measure (see the right hand side of the inequality~(\ref{eq_comparison_holmes_thompson}) in Lemma~\ref{rmonotone}) states that the Busemann area is smaller that the Holmes-Thompson
one, hence  combined with the inequality (\ref{majoairesphere-HT}) above, we get that for all $r>0$
\begin{equation}\label{majoairesphere-B}
  \Area_{\mathcal{P}_N}(S_{\mathcal{P}_N}(o,r))\leq(3N-6)\cdot 2C\cdot(r+\ln2+c_3)^2\text{.}
\end{equation}
Taking into account the 
co-area inequality (see Lemma~\ref{coarea}) in conjunction with the inquality~(\ref{majoairesphere-B})
leads to the following differential inequality
\begin{equation}
  \frac{\partial}{\partial r}\vol_{\mathcal{P}_N}(B_{\mathcal{P}_N}(o,r))\leq2\cdot (3N-6)\cdot 2C\cdot(r+\ln2+c_3)^2\text{,}
\end{equation}
which we can integrate over the interval  $[0,R]$ to finally obtain that for all $R>0$
\begin{equation}
  \vol_{\mathcal{P}_N}(B_{\mathcal{P}_N}(o,R))\leq2\cdot(N-2)\cdot 2C\cdot\bigl((r+\ln2+c_3)^3-c_3^3\bigr)\text{.}
\end{equation}
This concludes our proof in the three dimensional case.
\end{proof}

Le us remark that if we link this to our study of the asymptotic volume of the Hilbert geometry of 
polytopes \cite{ver2012} we obtain the following corollary

\begin{cor}
  Let ${\mathcal P}_N$ be an open convex polytope  with $N$ vertices in $\R^n$, for $n=2$ or $3$, 
then there are three constants $\alpha_n$, $\beta_n$ and $\gamma_n$ such that for any point $p\in {\mathcal P}_N$ one has
$$
\alpha_n \cdot N\leq \liminf_{R\to +\infty}  \frac{\vol_{\mathcal{P}_N}B_{{\mathcal P}_N}(p,R)}{R^n} \leq \beta_n\cdot N+\gamma_n
$$
\end{cor}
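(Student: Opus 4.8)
The plan is to combine the two-sided bound from Theorem~\ref{growthpolytope} with the passage between metric and asymptotic balls furnished by the inclusions~(\ref{inclusions}), and then to invoke the companion lower-bound estimate from~\cite{ver2012} for the growth of polytopal Hilbert geometries. The corollary asserts that for a fixed polytope $\mathcal{P}_N$ with $N$ vertices the quantity $\vol_{\mathcal{P}_N}B_{\mathcal{P}_N}(p,R)/R^n$ is squeezed, as $R\to+\infty$, between a linear function of $N$ from below and an affine function of $N$ from above. The upper half is essentially a repackaging of what we have already proved; the lower half is where the reference~\cite{ver2012} does the real work.

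First I would dispose of the dependence on the base point $p$ and on the normalisation. The entropy-type quantities in this paper are independent of $p$, and by the comparison~(\ref{eqmeasures2}) any two proper measures with density differ by a bounded multiplicative constant; hence passing from $\vol_{\mathcal{P}_N}$ to the Holmes–Thompson measure $\vol_*$ used in Theorem~\ref{growthpolytope} only alters the constants $\alpha_n,\beta_n,\gamma_n$. So I would fix $p=o$ (the centre of the Löwner ellipsoid, after a projective normalisation so that $\mathcal{P}_N$ sits inside the unit ball and contains the ball of radius $1/2n$, which is always achievable by Benzecri-type compactness) and work throughout with the Holmes–Thompson volume.

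\emph{Upper bound.} Here I would simply read off Theorem~\ref{growthpolytope}: $\vol_{\mathcal{P}_N}B_{\mathcal{P}_N}(o,R)\leq a_n(N)p_n(R)$ with $a_n$ affine and $p_n$ of degree $n$. Dividing by $R^n$ and letting $R\to+\infty$, only the top-degree coefficient of $p_n$ survives, and since $a_n$ is affine in $N$ this yields precisely a bound of the shape $\beta_n\cdot N+\gamma_n$. (The inclusions~(\ref{inclusions}) guarantee that the statement, proved in Theorem~\ref{growthpolytope} for asymptotic balls, transfers to genuine metric balls at the cost of a bounded shift in $R$, which is invisible after dividing by $R^n$ in the limit.)

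\emph{Lower bound, the main obstacle.} The nontrivial direction is producing a lower bound of the form $\alpha_n\cdot N$, i.e.\ showing that each of the $N$ vertices contributes, essentially independently, a definite amount of volume growth of order $R^n$. The upper-bound argument of Theorem~\ref{growthpolytope} proceeds by slicing the asymptotic sphere into the pieces lying over each edge or face and bounding each piece; for the lower bound one wants the reverse, namely that distinct vertices generate \emph{disjoint} regions of the ball whose individual volumes grow like $R^n$. This is exactly the content of our asymptotic analysis of polytopal Hilbert geometries in~\cite{ver2012}, where the volume growth of the Hilbert geometry of a polytope is computed to be of polynomial order $R^n$ with a leading coefficient proportional to the number of vertices (each vertex-neighbourhood behaving, after a projective chart, like a simplicial corner whose Hilbert geometry is bi-Lipschitz to a product of half-lines, as in the square computation of~\cite{ver2011}). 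I would therefore invoke~\cite{ver2012} to obtain, for $R$ large, a lower bound $\alpha_n N\,R^n$ up to lower-order terms, and conclude by taking the $\liminf$. The delicate point to check is that the per-vertex contributions can be separated so that their sum, rather than a single dominant vertex, appears — but this separation is precisely what the product-structure description near each vertex provides, so the estimate is genuinely linear in $N$.
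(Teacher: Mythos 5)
Your proposal is correct and follows essentially the same route as the paper: the paper likewise treats this as an immediate consequence of Theorem~\ref{growthpolytope} (whose affine-in-$N$, degree-$n$ bound yields the upper estimate $\beta_n N+\gamma_n$ after dividing by $R^n$, the inclusions~(\ref{inclusions}) handling the passage between asymptotic and metric balls) combined with the asymptotic volume computation for polytopal Hilbert geometries in~\cite{ver2012}, which supplies the lower bound linear in $N$. Your write-up is in fact more detailed than the paper's, which states the corollary as a one-line remark linking these two ingredients.
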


Now let us come back to our initial problem and see how theorem 
\ref{growthpolytope} implies theorem \ref{pdmajorant}.

\begin{proof}[Proof of theorem \ref{pdmajorant} ]
We remind the reader that $\vol_{n-1,\Omega}$ stands for the $n-1$-dimensional 
Holmes-Thompson measure.
Let  $o$  be the centre of the Lowner ellipsoid of $\Omega$ which is supposed to be the unit Euclidean ball. 
We consider $R$ large enough in order to have the Euclidean
ball of radius $1/2n$ inside all the asymptotic balls involved in the sequel.

The idea of the proof consists in replacing for all $R$ large enough 
the convex set $\Omega$ by a  convex polytope $\mathcal{P}_R$ such that
\begin{itemize}
\item $\mathcal{P}_R$ is a subset of $\Omega$;
\item The asymptotic ball $P_R$ of the polytope $\mathcal{P}_R$ is inside the
$\bigl(1-\tanh(R)\bigr)/2n$-Euclidean neighborhood
of the corresponding asymptotic ball $\asb_\Omega(o,R)$ of $\Omega$.
\item  the exponential volume growth, with respect
to the geometry of $\Omega$, of the two
families of asymptotic balls  $(P_R)_{R\in\R}$
and $(\asb_\Omega(o,R))_{R\in \R}$ are the same. 
\end{itemize}
Let us insist on the fact that the convex polytope $\mathcal{P}_R$
depends on $R$.

Then using Theorem~\ref{growthpolytope} we will bound from above the area in 
dimension three or the perimeter in dimension two of the convex polytope $P_R$ by
a function depending linearly
on the number of verticies of $P_R$ and polynomialy on $R$. 
This will allow us to conclude.

\begin{figure}[hbtp]
  \centering
  \includegraphics{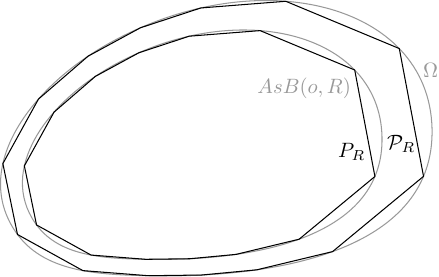}
  \caption{The asymptotic ball and an approximating polytope}
\end{figure}

Fix $R$. Among all
polytopes included both in the asymptotic ball $\asb_\Omega(o,R)$ and its
 $\bigl(1-\tanh(R)\bigr)/2n$ Euclidean-Hausdorff neighborhood pick
a polytope $P_R$ with the minimal number of verticies $N(R)$. Notice that we have 
\begin{equation}
  \label{eqNR}
  N(R)=N\biggl(\frac{1-\tanh(R)}{2n\tanh(R)}, \Omega\biggr)\text{.}
\end{equation}

\noindent\textbf{Claim:} There exists a constant $C>0$ such that for all $R$ the following inclusions
occur
\begin{equation}
  \label{eqinclusionpr}
   \asb_\Omega\bigl(o,R-C\bigr)\subset P_R\subset\asb_\Omega(o,R)\text{.}
\end{equation}

To prove this claim, on the one hand
we deduce from the first inclusion of Lemma~\ref{keyrmk} that 
$$
B_\Omega(o,R-\ln 2)\subset \asb_\Omega(o,R)\text{.}
$$
On the other hand the comparison of both Hilbert and Euclidean-Hausdorff neighborhoods, as stated in
Proposition~\ref{keyprop}, implies that the convex polytope $P_R$ lies in the 
$(\ln 3)/2$-Hilbert-Hausdorff neighborhood of the asymptotic ball  $\asb_\Omega(o,R)$. From these we deduce
the inclusion
\begin{equation}
  \label{eqincl1}
  B_\Omega\bigl(o,R-\ln 6\bigr)\subset P_R\subset\asb_\Omega(o,R)\text{.}
\end{equation}
Taking into account the second inclusion of Lemma~\ref{keyrmk} we finaly get
\begin{equation}
  \label{eqincl1}
  \asb_\Omega\bigl(o,R-\ln 6-\ln(n+1)\bigr)\subset P_R\subset\asb_\Omega(o,R)\text{,}
\end{equation}
which proves our claim with $C=\ln6+\ln(n+1)$.

Thanks to the  monotonicity of the Holmes-Thompson measure (see Lemma~\ref{l:htmonotonicity})  
we know that the area of the boundary $\partial P_R$ is less than the area
of the asymptotic sphere $AsS_\Omega(o,R)$, 
but larger than the area of the asymptotic
sphere of radius $R-C$, that is:
\begin{equation}
  \label{eq:upperboundzero}
  \vol_{n-1,\Omega}\bigl(AsS_\Omega(o,R-C)\bigr)\leq \vol_{n-1,\Omega}(\partial P_R)\leq \vol_{n-1,\Omega}\bigl(AsS_\Omega(o,R)\bigr)\text{.}
\end{equation}

From the equation~(\ref{eq:upperboundzero}) we deduce that
the logarithms of the areas of $\partial P_R$ and $AsS_\Omega(o,R)$ are asymptotically 
the same in the following sense
\begin{equation}
  \label{eq:upperboundequivalence}
  \lim_{R\to +\infty} \dfrac{\ln \vol_{n-1,\Omega}\bigl(AsS_\Omega(o,R)\bigr) }{\ln \vol_{n-1,\Omega}(\partial P_R) }=1 \text{.}
\end{equation}

Let us denote by ${\mathcal P}_R$ the image of $P_R$ by the dilation of ratio $1/(\tanh R)$. This is the dilation sending $\asb_\Omega(o,R)$ to $\Omega$. Hence,  by construction, 
${\mathcal P}_R \subset \Omega$ and therefore we have
\begin{equation}
  \label{eq:upperboundone}
  \vol_{n-1,\Omega}(\partial P_R)\leq \vol_{n-1,{\mathcal P}_R}(\partial P_R)\text{.}
\end{equation}

Now thanks to Theorem~\ref{growthpolytope}, for $n=2$ or $n=3$ and $R>0$ 
such that $\tanh(R)>3/4$, 
there are  two constants $a_n$, $b_n$ and a polynomial 
$Q_n$ of degree $n$ such  that
\begin{equation}
  \label{eq:upperboundtwo}
  \vol_{n-1,\Omega}(\partial P_R)\leq (a_nN(R)+b_n) Q_n(R)\text{.}
\end{equation}

% Now  
% $$
% \frac{\ln(N(R))}{R}= 2\frac{\ln(N(R))}{\ln(e^{2R})} = 2 \frac{\ln(N(R))}{\ln\Bigl(\frac{1-\tanh R}{2n\tanh(R)}\Bigr) }\frac{\ln\Bigl(\frac{1-\tanh(R)}{2n\tanh(R)}\Bigr)}{\ln(e^{2R})}
% $$
To conclude remark that
$$
\liminf_{R\to +\infty} \frac{\ln(N(R))}{R} = 2\underline{a}(\Omega)\text{, and } \limsup_{R\to +\infty} \frac{\ln(N(R))}{R} = 2\overline{a}(\Omega)\text{,}
$$
and use it with the inequality~(\ref{eq:upperboundtwo}) to get for instance
$$
\limsup_{R\to +\infty} \dfrac{\ln \vol_{n-1,\Omega}(\partial P_R)}{R} \leq 2\overline{a}(\Omega). 
$$
Finally the limit~(\ref{eq:upperboundequivalence}) implies that
$$
 \limsup_{R\to +\infty} \dfrac{\ln \vol_{n-1,\Omega}\bigl(AsS_\Omega(o,R)\bigr) }{R} \leq 2\overline{a}(\Omega).
$$
The left hand sides of this last inequality is easily seen to be the spherical
entropy (see (\ref{sphericalentropy})), which ends our proof.
\end{proof}

The following corollary follows from Bron\u ste\u\i n and Ivanov's Theorem~\ref{bitheo}
which states that $2\overline{a}\leq n-1$.

\begin{cor}
  Let $\Omega$ be an open bounded convex set in $\R^n$, for $n=2$ or $3$, then
$$
\Ent(\Omega)\leq n-1\text{.}
$$
\end{cor}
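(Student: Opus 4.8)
The plan is to deduce the bound by concatenating two facts that have already been established. By Theorem \ref{pdmajorant}, which holds precisely in dimensions $n=2$ and $n=3$, one has $\ent(\Omega)\leq PD(\Omega)$. On the other hand, the Bronshteyn--Ivanov estimate \cite{bi}, recorded at the end of the preceding section, yields $PD(\Omega)\leq n-1$ for every bounded convex body $\Omega$. Chaining the two gives
$$
\ent(\Omega)\leq PD(\Omega)\leq n-1\text{,}
$$
which is exactly the assertion.

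Should one wish to unfold the second inequality, the argument is this: after translating and rescaling so that $\Omega$ is inscribed in the unit euclidean ball---a step that changes neither $\ent(\Omega)$ nor $PD(\Omega)$, since an affine map distorts Hausdorff distances only by a bounded factor and hence leaves the asymptotic ratio defining $PD$ untouched---the Bronshteyn--Ivanov bound reads $N(\varepsilon,\Omega)\leq c(n)\varepsilon^{(1-n)/2}$. Consequently
$$
\frac{\ln N(\varepsilon,\Omega)}{-\ln\varepsilon}\leq \frac{\ln c(n)}{-\ln\varepsilon}+\frac{n-1}{2}\text{,}
$$
and letting $\varepsilon\to 0$ makes the first summand vanish, so that $PD(\Omega)=2\liminf_{\varepsilon\to 0}\tfrac{\ln N(\varepsilon,\Omega)}{-\ln\varepsilon}\leq n-1$.

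I do not anticipate any genuine obstacle here, because all of the geometric and analytic substance has been packaged upstream: the polytopal volume and area growth estimates of Theorem \ref{growthpolytope}, the Crofton comparison between the approximating polytope and the asymptotic ball, and the translation between Hausdorff-Euclidean and Hausdorff-Hilbert neighborhoods from Corollary \ref{keycor} all feed into Theorem \ref{pdmajorant}, while the combinatorial approximation content is carried entirely by the cited Bronshteyn--Ivanov theorem. The corollary is thus a one-line consequence, and the only point deserving a moment's attention is the harmless normalisation that places $\Omega$ inside the unit ball.
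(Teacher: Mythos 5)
Your proof is correct and follows exactly the route the paper intends: the corollary is stated as an immediate consequence of Theorem \ref{pdmajorant} combined with the Bronshteyn--Ivanov bound $PD(\Omega)\leq n-1$ recorded at the end of the section on polytopal dimension, which is precisely the chain of inequalities you write. Your unfolding of the Bronshteyn--Ivanov estimate and the remark on the harmless normalisation are accurate and consistent with the paper's (unwritten) argument.
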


We are now going to study the reverse inequality.

\begin{theo}\label{pdminoreent}
  Let $\Omega$ be an bounded convex domain  in $\R^n$.
The volume entropies of $\Omega$ are bigger or equal to twice the approximabilities of $\Omega$, i.e.,
$$
2\underline{a}(\Omega)\leq \lent(\Omega)\text{ and }2\overline{a}(\Omega)\leq \Ent(\Omega)\text{.}
$$
\end{theo}

\begin{proof}[Proof of Theorem~\ref{pdminoreent}]
Without loss of generality we suppose that the Euclidean unit ball is 
the Lowner ellipsoid of $\Omega$, 
and  $o$ is the centre of that ball.

The idea of the  proof is the following:
\begin{itemize}
\item We will show that for a good positive $\delta$ and any positive real 
number  $R$ there exists a $\delta$-separated set $\mathcal{S}_R$ in the metric ball of radius $B(o,R+2\delta)$,
such that the convex closure $P_R$ of that set contains the ball $B(o,R)$.
\item We will then use the fact that the cardinal of this $\delta$-separated set will be larger than the cardinal of the set of verticies of a vertex minimising
convex polytope included in the annulus $B(o,R+2\delta)\setminus B(o,R)$.

In other words, the number of points in the $\delta$-separated 
will be bounded from below by the number 
$N\bigl(\varepsilon(R),\Omega\bigr)$ from the introduction. Here  
$\varepsilon$ will be a function of $R$.
\item To conclude 
we will take into account that the union of the open metric balls of radius $\delta/2$ centred  at the point of the $\delta$-separated set $\mathcal{S}_R$ are disjoints and are in the ball $B(o,R+3\delta)$. Thus getting a lower bound on the volume of the ball $B(o,R+3\delta)$ in terms of $N\bigl(\varepsilon(R),\Omega\bigr)$ times a constant depending on the dimension.
\end{itemize}

Let us now start the proof. Consider the $(\ln 3)/2$-Hilbert neighborhood of the metric ball $B(o,R)$, that is 
$$V(R)=B(o,R+(\ln 3)/2)\text{,}$$  
and take a maximal $\delta=(\ln 3)/4$-separated set $\mathcal{S}_R$ on its boundary. 
This set contains $\#\mathcal{S}_R$ points.

Now let us take the convex hull $\mathcal{C}_R$ of these points. 
This is a polytope  with $N_2(R)\leq \#\mathcal{S}_R$ vertices.

\begin{claim}\label{keyclaimlb}
The polytope $\mathcal{C}_R$ is
included in the $2\delta$-Hilbert neighborhood
of $B(o,R)$ and contains $B(o,R)$. 
\end{claim}

Notice that if the claim holds, then for some
real constant $c$ independent of $R$ (see corollary \ref{keycor} once again), we have
\begin{equation}
  \label{eqlbent}
  \#\mathcal{S}_R\geq N_2(R)\geq \widetilde N(R-c):=N\bigl(1-\tanh(R-c)/4,\asb(o,R-c)\bigr).
\end{equation}

\begin{proof}[Proof of claim~\ref{keyclaimlb}]
First notice that $V(R)$ is a convex set (see Busemann \cite{busemann}, chapter II, section 18, page 105).
Therefore the convex hull is inside the $2\delta$-Hilbert neighborhood of $B(o,R)$,
that is $V(R)$.

Now let us suppose by contradiction that $\mathcal{C}_R$ does not contain $B(o,R)$.
Hence there exists some points $q$ in $B(o,R)$ which is not in $\mathcal{C}_R$.

We will show that we can find a point on the sphere $S(o,R+2\delta)$ which
is at a distance bigger that $\delta$ from all points of $\mathcal{S}_R$, which
will contradict its maximality.

Under our assumption, the  Hahn-Banach separation theorem asserts that there
 exists a linear form $a$, 
some constant $c$ and a hyperplane $H=\{x\mid a(x)=c\}$ 
which separates $q$ and $\mathcal{C}_R$, i.e., 
$a(q)>c$ and $a(x)<c$ for all $x\in \mathcal{C}_R$. 
Consider then $H_q=\{x\mid a(x)=a(q)\} $ the parallel hyperplane to 
$H$ containing $q$. Let us say that a point $x$ such that $a(x)\geq a(q)$ is \textsl{above} the hyperplane $H_q$.

Then let us define by $V_o'=\{ x\in \partial V(R)\mid a(x)\geq a(q)\}$ the part of the boundary of 
$V(R)$ which is above $H_q$.

Now we want to  metrically project each point of $V_o'$ onto $H_q$, 
that is to say that
to each point of $V_o'$ we associate its closest point on $H_q$.

However if $\Omega$ is not strictly convex, the projection might not be unique (see the appendix \ref{Aprojection}), 
that is why we are going to distinguish two cases.

\noindent\textsl{First case:} 
The convex set $\Omega$ is strictly convex, then the metric projection
is a map from $V_o'$ to $H_q$ and it is continuous, furthermore the point
on $H_q\cap V_o'$ are fixed and by convexity $H_q\cap V_o'$ 
is homeomorphic to a $n-2$-dimensional sphere. 
Therefore by Borsuk-Ulam's theorem (or its version known as the \textsl{antipodal map theorem}), there is
a point $p$ on $V_o'$ whose metric projection is $q$.

Now as $p$ is on the boundary of $V(R)$, that is the sphere $B(o,R+2\delta)$,
and $q$ is in $B(o,R)$ we  necessarily have 
$$d_\Omega(p,q)\geq (\ln 3)/2\text{.}$$
hence for all points $x$ in $H_q\cap V_o'$, we have 
$$d_\Omega(p,x)\geq d_\Omega(p,q)\geq (\ln 3)/2\text{.}$$

\noindent\textsl{Second case:} 
The convex set $\Omega$ is not strictly convex. Then let us approximate
it by a smooth and strictly convex set $\Omega'$ such that $\Omega\subset \Omega'$, and for all
pair of points $x, y \in V(R)$, 
\begin{equation}\label{smoothapprox}
\frac23\times d_{\Omega'}(x,y)\geq d_{\Omega}(x,y)\geq d_{\Omega'}(x,y).
\end{equation}

Then metrically project $V_o'$ onto $H_q$ with respect to $\Omega'$. By the same argument
as in the first case, we obtain a point $p$ such that for all $x$ in $H_q\cap V_o'$ we
have
$$
d_{\Omega'}(p,x)\geq d_{\Omega'}(p,q)\geq \frac32 d_\Omega(p,q)\geq \frac34 (\ln 3)
$$

which also implies by the inequalities~(\ref{smoothapprox})
 that for all $x$ in $H_q\cap V_o'$ we
have
$$
d_\Omega(p,x)\geq 3(\ln 3)/4.
$$

In either cases, using the Lemma~\ref{spheremonotonic} of 
the section~\ref{Adistanboule},
we deduce that all points on $\partial V_R$ at distance less or equal to $(\ln 3)/4$
from $p$ are above $H_q$ and are therefore 
contained in $V_o'$.  We then infer that there are no points of $\mathcal{S}_R$
at distance less or equal to $(\ln 3)/4$ from $p$, 
which contradicts the maximality of the set $\mathcal{S}_R$. 
\end{proof}

Now consider the union of the balls of radius $\delta/2$ centred at the points
of $\mathcal{S}_R$. This union is a subset of the ball $B(o,R+3\delta)$
and the balls are mutually disjoint. Now following our paper~\cite{ver2012},
there exists a constant $a_n$ such that for any open proper convex $\Omega$ and $x\in \Omega$,
the volume of the ball of radius $r$ centred at $x$ is at least $a_nr^n$.
Hence from this fact and the inequality~(\ref{eqlbent}) we get that
for all $R>0$,
\begin{multline}
  \vol_\Omega\bigl(B(o,R+3\delta)\bigr)\geq \#\mathcal{S}_R \cdot a_n\delta^n\\\geq N(1-\tanh(R-c)/4,\asb(o,R-c))\cdot a_n\delta^n\text{.}
\end{multline}

Now if we take the logarithm of the previous inequalities, divide by $R$ and
take either the $\liminf$ or the $\limsup$ we conclude the proof of the Theorem \ref{pdminoreent}.
\end{proof}

The proof of the main Theorem~\ref{MainTheo}  is now complete, and we now turn to its corollaries.

A point $x$ of a convex body $K$ is called a \textsl{farthest point} of $K$ if and only if, for some
point $y\in \R^n$, $x$ is farthest from $y$ among the points of $K$. The set of farthest points of $K$,
which are special exposed points, will be denoted by $exp^*K$. Thus a point $x\in K$
belongs to $exp^*K$ if and only if there exists a ball which circumscribes  $K$ and contains $x$ in its
boundary.

In dimension $2$ we get the following corollary,
\begin{cor}
  Let $\Omega$ be a plane Hilbert geometry, and let $d_M$ be the Minkowski
dimension of extremal points and $d_H$ the  Hausdorff dimension
of the set $exp^*\Omega$ of farthest points then we have the following inequalities
\begin{equation}
  \label{eqentfarthestpoints}
  d_H\leq \lent(\Omega) \leq \Ent(\Omega)\leq \frac{2}{3-d_M}\text{.}
\end{equation}
The left hand side inequality remains valid for higher dimensional Hilbert geometries.
\end{cor}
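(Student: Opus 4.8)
The plan is to derive both inequalities by feeding the Euclidean approximation estimates of Schneider and Wieacker \cite{sw} into the identity between volume entropy and polytopal dimension proved in this paper. Recall that $PD(\Omega)=2a(\Omega)$, that Theorem \ref{pdminoreent} gives $PD(\Omega)\leq\ent(\Omega)$ in every dimension, and that combining Theorems \ref{pdmajorant} and \ref{pdminoreent} yields the equality $\ent(\Omega)=PD(\Omega)=2a(\Omega)$ when $n=2$ or $3$. Consequently the whole corollary reduces to two classical planar/Euclidean facts about the approximability number, and no further Hilbert-geometric input is needed.

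For the left-hand inequality it suffices to establish $d_H\leq PD(\Omega)$, because $PD(\Omega)\leq\ent(\Omega)$ holds in all dimensions by Theorem \ref{pdminoreent}. Since $d_H$ is the Hausdorff dimension of $exp^*\Omega$ and $PD(\Omega)=2a(\Omega)$, the bound $d_H\leq PD(\Omega)$ is precisely Schneider and Wieacker's estimate $\dim_H(exp^*\Omega)\leq 2a(\Omega)$: the farthest points are the contact points of circumscribed balls, hence the very features that any economical polytopal approximation must resolve, so a farthest-point set of large Hausdorff dimension forces $N(\varepsilon,\Omega)$ to grow and thus raises $a(\Omega)$. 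Chaining $d_H\leq 2a(\Omega)=PD(\Omega)\leq\ent(\Omega)$ gives the lower bound, and since each link is valid in arbitrary dimension this proves the last sentence of the corollary as well.

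For the right-hand inequality, which is special to the plane, I would invoke the equality $\ent(\Omega)=2a(\Omega)$ valid for $n=2$ together with the two-dimensional estimate $a(\Omega)\leq \frac{1}{3-d_M}$ of Schneider and Wieacker, $d_M$ being the upper Minkowski dimension of the extremal points. Doubling gives $\ent(\Omega)=2a(\Omega)\leq\frac{2}{3-d_M}$, which is the announced bound and furnishes the promised approximation-theoretic reproof of the planar entropy estimate of \cite{berck_Bernig_Vernicos}. As a sanity check, for a disc one has $d_H=d_M=1$ and $a(\Omega)=1/2$, so all three quantities in (\ref{eqentfarthestpoints}) equal $1$; for a polygon $exp^*\Omega$ and the extremal set are finite, whence $d_H=d_M=0$, $a(\Omega)=0$, and the chain reads $0\leq 0\leq \frac{2}{3}$.

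The main obstacle is not in the Hilbert geometry, which is already encapsulated in Theorems \ref{pdmajorant} and \ref{pdminoreent}, but in transcribing the two Schneider--Wieacker inequalities in exactly the form required and in matching the dimensional conventions: one must check that the relevant statement controls the \emph{Hausdorff} dimension of the farthest points from below and the \emph{upper Minkowski} dimension of the extremal points from above, and that everything is phrased with $\liminf$ (hence with $\underline{\ent}$ and the $\liminf$ definition of $a(\Omega)$) so that the equality $\ent=2a$ and the inequality $PD\leq\ent$ apply verbatim.
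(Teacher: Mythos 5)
Your treatment of the left-hand inequality is correct and is essentially the paper's own argument: Schneider--Wieacker \cite{sw} supply the purely Euclidean statement $d_H\leq 2a(\Omega)=PD(\Omega)$, Theorem \ref{pdminoreent} supplies $PD(\Omega)\leq\ent(\Omega)$ in every dimension, and chaining them gives $d_H\leq\underline{\ent}(\Omega)$ together with the final sentence of the corollary about higher dimensions.

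The right-hand inequality is where your proposal has a genuine gap. You invoke ``the two-dimensional estimate $a(\Omega)\leq \frac{1}{3-d_M}$ of Schneider and Wieacker,'' but no such estimate exists in \cite{sw}. That inequality is not an input to the corollary; it is a \emph{new} consequence of it --- the remark immediately following the corollary in the paper says exactly this (``Inequality (\ref{eqentfarthestpoints}) induces a new result concerning the approximability in dimension $2$''), and the introduction lists $a(\Omega)\leq\frac{1}{3-d}$ among the new results of the paper. The paper's actual proof of the upper bound is simply to quote the theorem of Berck--Bernig--Vernicos \cite{berck_Bernig_Vernicos}, namely $\ent(\Omega)\leq\frac{2}{3-d_M}$ for plane Hilbert geometries, which was proved there by direct Hilbert-geometric methods independent of approximability; the equality $\ent(\Omega)=2a(\Omega)$ then converts it into the approximability bound, not the other way around. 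Your derivation therefore runs the logic backwards and is circular: the only available proof of $a(\Omega)\leq\frac{1}{3-d_M}$ passes through the very entropy estimate you are trying to establish. In particular the claim that your argument ``furnishes the promised approximation-theoretic reproof'' of the planar entropy bound is unfounded; to make the route honest you would need an independent, purely Euclidean proof that $N(\varepsilon,\Omega)\leq C\varepsilon^{-1/(3-d_M)}$, which neither you nor \cite{sw} provide.
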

\begin{proof}
  The left hand side of inequality (\ref{eqentfarthestpoints}) comes from R.~Schneider and J.~A.~Wieacker~\cite{sw},
whereas the right hand  one is the First main Theorem in G.~Berck, A.~Bernig and C.~Vernicos~\cite{berck_Bernig_Vernicos}.
\end{proof}

\begin{rmq}
  Inequality (\ref{eqentfarthestpoints}) induces a new result concerning the approximability in dimension $2$,
as it implies that
$$
\overline{a}(\Omega)\leq \frac{1}{3-d}.
$$
\end{rmq}

Lastly we are also able to prove the following result which relates the entropy of a convex set and the
entropy of its polar body.

\begin{cor}\label{cvxeAndPolarSameEntropy}
Let  $\Omega$ be a Hilbert geometry of dimension $2$ or $3$, then
$$
\lent(\Omega)=\lent(\Omega^*)\text{, and }\ \Ent(\Omega)=\Ent(\Omega^*)
$$
\end{cor}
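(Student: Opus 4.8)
The plan is to combine the main theorem's equality $\ent(\Omega)=2a(\Omega)=PD(\Omega)$ in dimensions $2$ and $3$ with a duality relation between the approximability of a convex body and that of its polar. Since the preceding results establish $\ent(\Omega)=PD(\Omega)$ whenever $n=2$ or $3$, and likewise $\ent(\Omega^*)=PD(\Omega^*)$ (the polar dual of a bounded convex body with the origin in its interior is again a bounded convex body of the same dimension), it suffices to prove the purely convex-geometric statement $PD(\Omega)=PD(\Omega^*)$, equivalently $a(\Omega)=a(\Omega^*)$.

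First I would normalise: after a projective (indeed affine) transformation I may assume $o$, the centre of the Löwner ellipsoid of $\Omega$, is the origin and that the unit euclidean ball $\mathcal E$ is that ellipsoid, so that $\frac1n\mathcal E\subset\Omega\subset\mathcal E$ as recorded in the excerpt. Under polarity these inclusions reverse to $\mathcal E\subset\Omega^*\subset n\mathcal E$, so $\Omega^*$ is also sandwiched between two concentric balls whose radii have a fixed ratio depending only on $n$. The key geometric input is then a \emph{duality estimate for Hausdorff approximation}: if a polytope $P$ with $N$ vertices satisfies $d_H(P,\Omega)\le\varepsilon$, its polar $P^*$ is a polytope with $N$ facets whose Hausdorff distance to $\Omega^*$ is bounded by $C(n)\,\varepsilon$ for small $\varepsilon$, where $C(n)$ depends only on the sandwiching radii. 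The point is that, because all bodies stay uniformly bounded away from the boundary of $\mathcal E$ and from the origin, the polar map is bi-Lipschitz (as a map on support functions / radial functions) on the relevant family, so it distorts Hausdorff distance by at most a dimensional constant.

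The next step is to convert facet-count into vertex-count. A polytope with $N$ vertices has at most $c(n)N^{\lfloor n/2\rfloor}$ facets by the upper bound theorem, but for the \emph{approximability exponent} this polynomial blow-up is irrelevant: since $a(\Omega)=\liminf_{\varepsilon\to0}\ln N(\varepsilon,\Omega)/(-\ln\varepsilon)$ is a logarithmic growth rate, replacing $N$ by any fixed power $N^{k}$ only multiplies the exponent by $k$. In dimensions $2$ and $3$ this factor is harmless: for $n=2$ a polytope with $N$ vertices has exactly $N$ edges (facets), and for $n=3$ Euler's formula together with $3f\le 2e$ (already invoked in the proof of Theorem~\ref{growthpolytope}) gives $f\le 2N-4$, so vertex-count and facet-count are comparable up to a linear factor. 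Hence from $d_H(P,\Omega)\le\varepsilon$ with $N$ vertices we obtain a polytope $P^*$ approximating $\Omega^*$ within $C(n)\varepsilon$ using $O(N)$ vertices, giving $N(C(n)\varepsilon,\Omega^*)\le c(n)\,N(\varepsilon,\Omega)$, and a symmetric argument (using $(\Omega^*)^*=\Omega$) yields the reverse. Taking logarithms, dividing by $-\ln\varepsilon$, and passing to the $\liminf$, the constants $C(n),c(n)$ and the linear facet/vertex factor all disappear because $\ln(c(n)\varepsilon^{-?})/(-\ln\varepsilon)\to$ the same exponent; thus $a(\Omega)=a(\Omega^*)$, whence $PD(\Omega)=PD(\Omega^*)$ and finally $\ent(\Omega)=\ent(\Omega^*)$.

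I expect the main obstacle to be the duality estimate for Hausdorff distance under polarity, specifically controlling it \emph{uniformly as $\varepsilon\to 0$} and for non-smooth, non-strictly-convex bodies. The clean statement ``$d_H(P^*,\Omega^*)\le C(n)d_H(P,\Omega)$'' requires that the comparison be performed where both bodies are pinched between $\mathcal E$ and $n\mathcal E$; the Lipschitz constant of the polar map degenerates near the origin and near infinity, so the normalisation $\frac1n\mathcal E\subset\Omega\subset\mathcal E$ is essential and must be propagated to the approximating polytopes (one may first intersect or enlarge $P$ so that it, too, lies in the annulus, changing its vertex count by at most a bounded factor). A secondary subtlety is that the optimal approximating polytope for $\Omega^*$ need not be the polar of the optimal one for $\Omega$; but since we only need the inequality $N(C\varepsilon,\Omega^*)\le c\,N(\varepsilon,\Omega)$ in each direction, exhibiting \emph{one} good polytope ($P^*$) suffices, and optimality is never required. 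Once the bi-Lipschitz duality estimate is in hand, the remaining passage to exponents is routine.
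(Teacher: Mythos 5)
Your proposal is correct and follows the same overall strategy as the paper's proof: reduce, via the Main Theorem, to the purely convex-geometric statement $a(\Omega)=a(\Omega^*)$, and prove that by taking polars of approximating polytopes, which exchanges vertex counts and facet counts. The difference lies in how the two supporting ingredients are handled. The paper's proof is essentially a citation: it observes that the polar of a $k$-vertex polytope $\varepsilon$-close to $\Omega$ is a $k$-faced polytope close to $\Omega^*$, and then invokes Gruber's result that the approximability exponent is the same whether one minimises vertices or facets. You instead prove the needed instance of that fact by hand --- in dimension $2$ vertices equal edges, and in dimension $3$ Euler's formula with $3f\le 2e$ gives $f\le 2N-4$, so the polar $P^*$ has $O(N)$ vertices --- which is self-contained and exactly sufficient in the dimensions where the Main Theorem applies. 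You are also more careful than the paper on the metric point: the paper asserts that $P_k^*$ lies in the $\varepsilon$-Hausdorff neighborhood of $\Omega^*$, whereas polarity really only distorts Hausdorff distance by a constant $C(n)$ once everything is pinched between $\frac1n\mathcal{E}$ and $\mathcal{E}$ (your bi-Lipschitz estimate via support and radial functions); as you say, such constants are invisible to the logarithmic exponent. One small caveat: your aside that a polynomial facet blow-up $N^{\lfloor n/2\rfloor}$ would be ``irrelevant'' in general dimension is not right --- it would multiply the exponent by $\lfloor n/2\rfloor$ and destroy the equality --- but your argument never uses it, since in dimensions $2$ and $3$ the conversion factor is linear. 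In short: both proofs are valid and structurally identical; yours buys explicit constants and self-containedness, the paper's buys brevity by outsourcing the vertex/facet equivalence to Gruber.
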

\begin{proof}

  It suffices to prove that the approximability of a convex body $\Omega$ containing the origin and its polar $\Omega^*$ are equal. 
Without loss of generality we can assum that the unit 
ball is $\Omega$'s John's ellipsoid. Hence $\Omega$ is contained in the ball of radius 
the dimension and its polar contains the ball of radius the inverse of the dimension and is included in the unit ball.
Now, notice that for $\varepsilon$ small enough, if $P_k$ is a polytope with $k$ vertexes inside the $\varepsilon$-Hausdorff neighborhood
of $\Omega$, then its polar $P_k^*$ is a polytope with $k$ faces containing $\Omega^*$ and contained in its $\varepsilon\cdot C$-Hausdorff neighborhood, for some constant $C$ depending only on the dimension.
A known fact (see Gruber~\cite{gruber} section 11.2) states that the approximability can be computed either by minimising the
vertexes or the faces. 
Hence $\underline{a}(\Omega)=\underline{a}(\Omega^*)$ and $\overline{a}(\Omega)=\overline{a}(\Omega^*)$.
The statement therefore follows from the Main Theorem. 
\end{proof}

\section{Intermediate growth}\label{growth}

\begin{quote}
  In this section we focus on the two dimensional case.
\end{quote}

The intermediate volume growth will follow from Theorem~\ref{growthpolytope} and 
the following Proposition,
which allows us to control both the length of sphere and their volume in dimension $2$
from below, thanks to the number of verticies of an ad-hoc approximating
polytope, in the fashion of Theorem~\ref{growthpolytope}, except that here
the lower bounds depend  on $\Omega$.

\begin{prop}\label{lbounddeuxd}
Let $\Omega$ be an open bounded convex set in $\R^2$ whose Lowner ellipsoid is the Euclidean unit ball centered at $o\in \Omega$. Let $N(\epsilon,\Omega)$ be the minimal number of verticies 
of a polygon containing $\Omega$ at Euclidean-Hausdorff distance less that $\epsilon$ from $\Omega$, and to any positive real number $R$ let $N(R):= N(\frac{1-\tanh(R)}{4\tanh(R)},\Omega)$.

Then there exists three constants $R_2$, $K_2$ and $C_2$ independant of $\Omega$, such that
for all real numbers $R>R_2$ we have
\begin{equation}\label{minorationdeuxd}
  \begin{array}{rcl}
     \text{Length}_\Omega\bigl(S_\Omega(o,R)\bigr)&\geq& \bigl(N\bigl(R-(3/2)\ln 3\bigr)-2\bigr) K_2\text{,}\\
     \vol_{\Omega} \bigl(B_\Omega(o,R+K_2/2)\bigr)&\geq&\bigl(N\bigl(R-(3/2)\ln 3\bigr)-2\bigr) C_2 (K_2)^2\text{.} 
  \end{array}
\end{equation}
The same result holds for the asymptotic balls with $R>R_2+\ln2$.
\end{prop}

We want to stress out once again that there is actually no
loss in generality in supposing the Euclidean unit ball to be the Lowner ellipsoid
of $\Omega$.

\begin{proof}
For any real positive number $R$ let $\epsilon(R)=\bigl(1-\tanh(R)\bigr)/4$.

The idea is to built a convex polygone in the $\epsilon(R)$-neighborhood of an asymptotic ball
of radius $R$ in a way we can control uniformly 
from below the length of the edges. 

More precisely we have the following.
\begin{claim}
  There exist a convex polygone $\mathcal{P}_R$ such that
\begin{itemize}
\item $\mathcal{P}_R$ contains the asymptotic ball $\asb(o,R)$ and  
is in its $\epsilon(R)$ Haus\-dorff-Euclidean
neighborhood;
\item  All the edges of $\mathcal{P}_R$ but one are  tangent
to $\asb(o,R)$ and  all its vertexes belong to the boundary $\partial_R\asb$ of the  
$\epsilon(R)$-Hausdorff neighborhood of the asymptotic ball  $\asb_\Omega(o,R)$.
\end{itemize}
\end{claim}

This claim is a consequence of the following algorithm: 
\begin{enumerate}[\textsl{Step} 1]
\item Draw one tangent to $\asb_\Omega(o,R)$, it will meet the boundary $\partial_R\asb$ of its $\epsilon(R)$-Hausdorff neighborhood at two  points $x_1$ 
and  $x_2$, where $\vect{ox_1}$, $\vect{ox_2}$ are positively oriented.
\item We start from
$x_2$ and draw the second tangent to $\asb_\Omega(0,R)$ passing by $x_2$. This second tangent will meet the boundary $\partial_R\asb$ at a second point $x_3$.
\item for $k>2$, if the second tangent $t_{k+1}$ to $\asb_\Omega(0,R)$ passing by $x_k$ has its second intersection with $\partial_R\asb$
on the arc of from $x_1$ to $x_k$ (in the orientation of the construction), we stop and consider
for $\mathcal{P}_R$ the convex hull of $x_1,\ldots,x_k$, otherwise we take for $x_{k+1}$ that  second intersection  
of the tangent $t_{k+1}$ with $\partial_R\asb$ and start again that step.
\end{enumerate} 

This algorithm will necessarily finish, because by convexity 
the arclength of $x_ix_{i+1}$ on $\partial_R\asb$ built this way
is bigger than $2\epsilon(R)$. 
At the end of this algorithm we obtain, by minimality, a polygon which has
at least $N(R)=N\bigl(\epsilon(R),\asb_\Omega(o,R)\bigr)=N\bigl(\epsilon(R)/\tanh(R),\Omega\bigr)$ edges.

Recall that Proposition~\ref{keyprop} guaranties us
that the $\epsilon(R)$-Euclidean neighborhood of the asymptotic ball $\asb_\Omega(o,R)$ 
is included in its $(\ln 3)/2$-Hausdorff-Hilbert neighborhood and therefore,
taking into account the inclusions (\ref{inclusions}), we obtain
$$ 
B_\Omega(o,R-\ln 2)\subset \asb_\Omega(o,R)\subset\mathcal{P}_R\subset B_\Omega\bigl(o,R+(3/2)\ln 3\bigr)\text{.}
$$  

Moreover, the length coincides with the Holmes-Thompson $1$-dimensional measure. 
Therefore, the  monotonicity of the later, 
as seen in Lemma \ref{l:htmonotonicity}, implies the following inequalities:

\begin{equation}\label{ineqlongueur}
  \begin{split}
     \text{length}_\Omega S_\Omega(o,R-\ln 2)\leq& \text{length}_\Omega \partial\asb_\Omega(o,R) \\
&\leq \text{length}_\Omega \partial\mathcal{P}_R \leq \text{length}_\Omega S_\Omega(o,R+3/2\ln 3)\text{.}
  \end{split}
\end{equation}

Now let $\mathfrak{P}_R$ be the image of $\mathcal{P}_R$ under the dilation of ratio $\tanh(R)^{-1}$ centred
at $o$. By construction $\mathfrak{P}_R$ contains $\Omega$,  which implies
$$
\text{Length}_{\mathfrak{P}_R} \partial\mathcal{P}_R \leq \text{length}_\Omega \partial\mathcal{P}_R\text{.}
$$

Therefore it sufficies to prove the following claim:

\begin{claim}\label{kclaim}
  Let $I(R)\in \partial_R\asb$ be a vertex of $\mathcal{P}_R$, such that 
the two edges containing $I(R)$ are
tangent to $\asb_\Omega(o,R)$ at $b(R)$ and
$c(R)$. Then for any $R> \tanh^{-1}(1/2)=R_2$
$$
d_\Omega(b(R),c(R))\geq d_{\mathfrak{P}_R}\bigl(b(R),c(R)\bigr) \geq \ln(6/5)=K_2\text{.}
$$
\end{claim}

Indeed, let us assume that claim~\ref{kclaim} is true, and for $R>r_2$
consider a vertex $v$ of $\mathcal{P}_R$ whose incident edges are tangent 
to $\asb(o,R)$.
Let $b$ and $c$ the two points of tangency, then by the triangle inequality, 
$$
d_\Omega(b,v)+d_\Omega(c,v)\geq d_\Omega(b,c)\geq K_2\text{.}
$$
Therefore the length of $\mathcal{P}_R$ is bigger than $\bigl(\widetilde N(R)-2\bigr)K_2$, where $\widetilde N(R)$
is number of edges of $\mathcal{P}_R$
(because of the possible exception at $x_1$ and the last point of the construction above). 
Hence taking $R_2=r_2+(3/2)\ln3$, thanks to the equation~(\ref{ineqlongueur}), we get
for $R>R_2$
\begin{equation}
  \label{eqlowerlength}
  \text{Length}_\Omega\bigl(S_\Omega(o,R)\bigr)\geq \Bigl(\widetilde N\bigl(R-(3/2)\ln 3\bigr)-2\Bigr) K_2\text{,}
\end{equation}
and as $\widetilde N\bigl(R-(3/2)\ln 3\bigr) \geq N\bigl(R-(3/2)\ln 3\bigr)$ the first inequality in~(\ref{minorationdeuxd}) is proved.

Now concerning the volume of the ball, 
Claim \ref{kclaim} and Proposition \ref{spheremonotonic} imply
that the contact points of the edges of $\mathcal{P}_R$ with $\asb_\Omega(o,R)$ 
form a $K_2$ separated set. Hence we can conclude in the same
way as we did during  the Proof of Theorem~\ref{pdminoreent}, \textsl{i.e.},
the balls of radius $K_2/2$ centred at those points are disjoint and 
included in the metric ball  
$B_\Omega\bigl(o,R+(3/2)\ln 3+(K_2/2)\bigr)$. 
Now following \cite{ver2012}, there exists a constant $C$ depending
only on the dimension such that the volume of the ball of radius $r$ is at 
least $C\cdot r^2$. Hence we obtain
that
\begin{equation}
\vol_\Omega B_\Omega\bigl(o,R+(3/2)\ln 3+(K_2/2)\bigr) \geq (\widetilde N(R)-2)\cdot C \cdot(K_2/2)^2 \text{,}
\end{equation}
and the last inequality~(\ref{minorationdeuxd}) follows onece again from the inequality 
$\widetilde N(R)\geq N(R)$.

\bigskip

\textsl{Proof of the Claim~\ref{kclaim}.} 

Let $a(R)$ (resp. $d(R)$) be the opposite vertex to $I(R)$ on the edge containing $b(R)$ (resp. $c(R)$).

Now let us consider the images $I$, $a$,$b$, $c$ and $d$ of the five points 
$I(R)$, $a(R)$, $b(R)$, $c(R)$ and $d(R)$ by the dilation of ratio
$1/\tanh R$ centred at $o$. Then we are in  the same configuration
as in the claim \ref{lessegments}, with $\mathfrak{P}_R$ instead of $\Omega$. Let 
 $u(R)=\frac{bc}{BC}\frac{\tanh(R)}{1-\tanh(R)}$, then
following (\ref{minorationlg}) we have
$$
d_{\mathfrak{P}_R}\bigl(b(R),c(R)\bigr)\geq \frac12 \ln\Bigl(1+\frac{u(R)+u(R)^2}{s(1-s)}\Bigr) \text{.}
$$

Therefore we need to obtain a lower bound for $u(R)$. To do this, let
$p$ be the intersection of the line $oI$ with the lines $(bc)$. 
Then thanks to Thales's theorem we have
$$
\frac{BC}{bc}=\frac{oI}{pI}=\frac{op+pI}{pI}=1+\frac{op}{pI}
$$

Concerning the distance $op$, recall that the unit ball centred at $o$ is the Lowner ellipsoid  of $\Omega$ and therefore we get $op\leq\frac{1}{\tanh(R)}$,
 because by convexity $p$ is in $\Omega$.

Regarding the distance $pI$, as $I(R)$ is on the boundary of the  $(1-\tanh(R))/4$ Euclidean neighborhood of $\asb(o,R)$, 
$I$ is on  the boundary of the  $\bigl(1-\tanh(R)\bigr)/4\tanh(R)$ neighborhood of $\Omega$. 
Hence we obtain 
$$pI\geq \bigl(1-\tanh(R)\bigr)/4\tanh(R)\text{,}$$
 because the segment $[p,I]$ intersects $\Omega$. 
This way we obtain
$$
\frac{BC}{bc}\leq 1+\frac{4}{1-\tanh(R)}
$$
which in turn implies that
$$
1 \leq \frac{5-\tanh(R)}{1-\tanh(R)}\frac{bc}{BC}\leq \frac{5}{1-\tanh(R)}\frac{bc}{BC}\text{.}
$$
Hence
\begin{equation}
  \label{eqminorationlg}
  \frac{\tanh(R)}{5} \leq u(R)
\end{equation}
Therefore if $\tanh(R_2)=1/2$ then for all $R>R_2$ we get $10u(R)>1$.

Finally using the fact that $s(1-s)\leq1/4$ and taking $R>R_2$ we get
$$
d_{\mathfrak{P}_R}\bigl(b(R),c(R)\bigr)\geq\frac12 \ln\biggl(1+\frac{2}{5}+\frac{1}{25}\biggr)=\ln(6/5)>0.18\text{.}
$$
\end{proof}

\begin{proof}[Proof of the intermediate volume growth theorem]

  Following Schneider and Wieacker \cite[theorem 4, p. 154]{sw} and its proof, 
for any increasing function  $f\colon\R^+\to \R^+$  such that
$$
\liminf_{r\to +\infty} \frac{e^r}{f(r)}>0
$$
there exists a convex set $\Omega_f$ such that 
\begin{equation}
  \label{eq:inegalitedouble}
  0< \liminf_{r\to +\infty} \frac{N\bigl(1-\tanh(r),\Omega_f\bigr)}{f(r)}\leq \limsup_{r\to +\infty} \frac{N\bigl(1-\tanh(r),\Omega_f\bigr)}{f(r)} < +\infty\text{.}
\end{equation}

In the sequel we will denote $N(r)=N\bigl(1-\tanh(r),\Omega_f\bigr)$ and 
drop the the subscript $\Omega_f$ in the notation of metric and asymptotic balls.

Now let $o$ be the center of the lowner ellipsoid of $\Omega_f$.
Following Proposition \ref{lbounddeuxd} for $K_2=\ln(6/5)$ and $r>0$ satisfying  
$$\tanh\bigl(r-(3/2)\ln 3-K_2/2\bigr)\geq 1/2$$ we have that
\begin{equation}
  \label{eq:application}
  \begin{array}[t]{rcl}
    \vol_{\Omega_f}\bigl(B(o,r)\bigr)&\geq& \Bigl(N\bigl(r-\frac32\ln 3-K_2/2\bigr)-2\Bigr) C (K_2)^2\text{.}
  \end{array}
\end{equation}
This inequality  implies that
\begin{equation}\label{eq:igone}
  \liminf_{r\to +\infty} \frac{\vol_{\Omega_f}\bigl(B(o,r)\bigr)}{f(r)}\geq C (K_2)^2\liminf_{r\to +\infty} \frac{N\bigl(r-\frac32\ln 3-K_2/2\bigr)-2}{f(r)}\text{.}
\end{equation}

Now using inequalities (\ref{eq:upperboundzero}) to (\ref{eq:upperboundtwo}) from Theorem  \ref{pdmajorant} proof's 
we get the existence of three
constants $a$, $b$ and $c$ such that if $K=\ln 18$ and $r>0$ is a real number satisfying $\tanh(r-C)>3/4$ then
\begin{equation}\label{eq:applicationbis}
  \vol_{\Omega_f}\bigl(\asb(o,r-C)\bigr)\leq N\biggl(\frac{1-\tanh(r)}{4\tanh(r)},\Omega_f\biggr) (ar^2+br+c)\text{.}
\end{equation}

The inclusion $B(o,r-\ln(2)-C)\subset \asb(o,r-C)$ given by  (\ref{inclusions}) in Lemma \ref{keycor} proof's 
allow us to obtain the next inequality:
\begin{equation}
  \vol_{\Omega_f}\bigl(B(o,r-C-\ln 2)\bigr)\leq N\biggl(\frac{1-\tanh(r)}{4\tanh(r)},\Omega_f\biggr) (ar^2+br+c)\text{,}
\end{equation}
which in turm implies that
\begin{equation}\label{eq:igtwo}
  \limsup_{r\to +\infty} \frac{\vol_{\Omega_f}\bigl(B(o,r)\bigr)}{r^2f(r)}\leq a \times \limsup_{r\to +\infty} \frac{N\bigl(\frac{1-\tanh(r)}{4\tanh(r)},\Omega_f\bigr)}{f(r)} \text{.}
\end{equation}

Combining both inequalities (\ref{eq:application}) and \ref{eq:applicationbis}) and using
the asymptotic comparison( \ref{eq:inegalitedouble}) we finally conclude that
$$
\liminf_{r\to +\infty} \frac{\ln \vol_{\Omega_f}\bigl(B(o,r)\bigr)}{r} = \liminf_{r\to +\infty} \frac{\ln f(r)}{r}\text{.}
$$

In the above proofs we can replace $\liminf$ by $\limsup$.

To obtain the penultimate statement consider $f(r)=e^r/r^3$, and apply our result to get a convex set $\Omega_f$
whose entropy is $1$.
However, by definition of the centro-projective area and our result in the two dimensional case \cite{berck_Bernig_Vernicos} we have
\begin{multline}
 \mathcal{A}_o(\Omega_f) =\lim \frac{\vol_{\Omega_f}\bigl(B(o,r)\bigr)}{\sinh r}=\limsup \frac{\vol_{\Omega_f}\bigl(B(o,r)\bigr)}{\sinh r}\\
= \limsup \frac{\vol_{\Omega_f}\bigl(B(o,r)\bigr)}{e^rr^{-1}}\times \frac{e^r}{r\sinh r}=0.
\end{multline}

For the last statement take $f(r)=r^3$ and apply our result to get a convex $\Omega_f$ such that 
$$
\limsup{\frac{\vol_{\Omega_f}\bigl(B(o,r)\bigr)}{r^2}}=\limsup{\frac{r\vol_{\Omega_f}\bigl(B(o,r)\bigr)}{r^3}}=+\infty
$$
hence following our paper \cite{ver2012}, $\Omega_f$ is not a polytope. Furthermore the entropy of such a $\Omega_f$ is zero as we have
$
\limsup_{\infty} \ln(r^3)/r=0.
$

\end{proof} 

To conclude this section let us show how Corollary \ref{cor:volentropyone} related to the values attained by the lower and upper volume entropies easily follows: 
Suppose first that $0<\alpha\leq \beta\leq 1$,
and start by considering a sequence  $(U_n)_{n\in\N}$ defined for some $x>0$ by  $U_0=e^{bx}$, and for all $k\geq0$ by $$U_{2k+1}=e^{\alpha U_{2k}} \text{, and } 
U_{2k+2}=e^{\beta U_{2k+1}}\text{.}$$
Then take an increasing function 
$f\colon\R^+\to \R^+$ such that for all
$r\in \R$,
$$
e^{\alpha r}\leq f(r)\leq e^{\beta r}\text{,}
$$
and $f(U_n)=U_{n+1}$ for all $n\geq 0$. We can define such a function piecewise linearly.

If $\alpha=0$, replace $r\mapsto e^{\alpha r}$ by $r\mapsto 2r$ above and take $U_{2k+1}=2U_{2k}$ for all $k\geq 0$.

% \section{Asymptotic Volume of balls}
% \begin{quote}
%   In this section we prove that the asymptotic formula obtained in \cite{berck_Bernig_Vernicos} in dimension
% $2$ and for $C^{1,1}$ regular convex sets is true in any dimension without any regularity assumption.
% \end{quote}

% \begin{theo}
%   Let $\Omega$ be an open  convex set in $\R^n$, then
% \begin{equation} \label{eq_entropy_coefficient_intro}
%   \lim_{r \to \infty} \frac{\vol B(o,r)}{\sinh^{n-1}r}=\frac{1}{n-1} \mathcal{A}_p(\Omega). 
% \end{equation}
% \end{theo}

% In aim to obtain this result we need a local version of  E.~M.~Bron\u ste\u\i n and L.~D.~Ivanov~\cite{bi}.

% \begin{prop}
%   Let $\Omega$ be an open convex set in $\R^n$ whose lowner ellipsoid is the unit ball. Consider $p \in \partial\Omega$ and for $\varepsilon>0$
% let $\mathcal{S}_{p,\varepsilon}$ be the intersection of $\partial\Omega$ with the Euclidean ball of radius $\varepsilon$ centred at $p$.
% Then for any $\delta\leq \varepsilon$, there are at most $f$ points in the $\delta$ neighborhood of $\mathcal{S}_{p,\varepsilon}$ whose
% convex closure contain $\mathcal{S}_{p,\varepsilon}$.
% \end{prop}
% \begin{proof}
%   We just need to apply  E.~M.~Bron\u ste\u\i n and L.~D.~Ivanov result to the ball $B(p,\varepsilon)$ of radius $\varepsilon$ centred
% at $p$. Their proof goes as follows: an $\eta$-net on the ball of radius $2\varepsilon$ contains at most $c(n)(2\varepsilon/\eta)^{n-1}$.
% Let $y_1,\ldots,y_N$ such a net, and consider the intersections $\tilde y_i$ of the line $(py_i)$ with $\partial B(p,\varepsilon)$ 
% \end{proof}

\appendix

\section{Metric projection \\ in a Hilbert geometry}\label{Aprojection}
The following is a reformulation and a detailed proof of a statement found in section 21 and 28 of Busemann-Kelly's book \cite{busemannkelly}
in any dimension.

\begin{prop}
  Let $(\Omega,d_{\Omega})$ be a Hilbert geometry in $\R^n$. Let $p$ be a point of $\Omega$ and $H$ an hyperplane intersecting $\Omega$.
Then $q\in H\cap\Omega$ is a \textsl{metric projection} of $p$ onto $H$, i.e.,
$$
d_\Omega(p,H)=d_\Omega(p,q)\text{,}
$$ 
if and only if $\partial\Omega$ has, at its intersection with the straight line $(pq)$, supporting hyperplanes concurrent
with $H$ (the intersection of these three hyperplanes is an $n-2$-dimensional affine space).
\end{prop}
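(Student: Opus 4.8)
The plan is to recognise that ``$q$ is a metric projection'' is a supporting-hyperplane statement about a \emph{Hilbert ball}, and then to read the supporting hyperplanes at $q$ off the two boundary points $a,b$ cut out by the line $(pq)$ (ordered $a,p,q,b$). To set this up I would introduce the forward and backward gauges based at $p$: for $x=p+tv$ put $h(x)=t/\beta_v$ and $g(x)=t/\alpha_v$, where $p+\beta_v v\in\partial\Omega$ and $p-\alpha_v v\in\partial\Omega$. Both $h$ and $g$ are convex (Minkowski functionals of the convex bodies $\Omega-p$ and $p-\Omega$), and a direct cross-ratio computation gives $d_\Omega(p,x)=\tfrac12\ln\frac{1+g(x)}{1-h(x)}$. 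Hence, with $r=d_\Omega(p,q)$, the ball $B:=B_\Omega(p,r)$ is the sublevel set $\{\,g+e^{2r}h\le e^{2r}-1\,\}$ of a convex function, so $B$ is a convex body with $q\in\partial B$. The first reduction is then the standard convex fact that $q$ is a metric projection of $p$ onto $H$ if and only if $H$ is a supporting hyperplane of $B$ at $q$: indeed $q$ minimises $d_\Omega(p,\cdot)$ on $H$ exactly when $H$ avoids the open ball and touches it at $q$.

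\textbf{Supporting hyperplanes of $B$ at $q$.} A supporting hyperplane of $B=\{g+e^{2r}h\le e^{2r}-1\}$ at $q$ has normal covector a subgradient $\eta\in\partial g(q)+e^{2r}\,\partial h(q)$. Since the level sets of $h$ are the positive homothets of $\Omega$ centred at $p$, the point $q$ corresponds to the forward exit $b$, and the subgradients $\zeta_h\in\partial h(q)$ are exactly the positive multiples of the outward normals $n_b$ of the supporting hyperplanes $H_b$ of $\Omega$ at $b$; dually, the level sets of $g$ are the \emph{negative} homothets of $\Omega$ centred at $p$, so $q$ corresponds to the backward exit $a$ and $\zeta_g\in\partial g(q)$ runs through signed multiples of the outward normals $n_a\in N_\Omega(a)$. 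Thus every supporting normal at $q$ is $\eta=\zeta_g+e^{2r}\zeta_h$ with $n_a\in N_\Omega(a)$, $n_b\in N_\Omega(b)$.

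\textbf{Concurrency and the pencil.} Put $H_a=\{\langle n_a,\cdot-a\rangle=0\}$, $H_b=\{\langle n_b,\cdot-b\rangle=0\}$ and $L=H_a\cap H_b$. Euler's relation for the degree-one gauges gives $\zeta_h(b-q)=1-h(q)$ and $\zeta_g(a-q)=-(1+g(q))$, so for $x\in L$ (where $\langle n_a,x-a\rangle=\langle n_b,x-b\rangle=0$) one finds $\langle\eta,x-q\rangle=-(1+g(q))+e^{2r}(1-h(q))$, which vanishes by the boundary relation $g(q)+e^{2r}h(q)=e^{2r}-1$. Hence the supporting hyperplane $\{\langle\eta,\cdot-q\rangle=0\}$ contains both $q$ and the $(n-2)$-flat $L$. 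Because a supporting hyperplane of $\Omega$ misses the open set $\Omega\ni q$, we have $q\notin H_a\supseteq L$, so among the one-parameter pencil of hyperplanes through $L$ there is a \emph{unique} one passing through $q$. This pins everything down in both directions: if $q$ is a metric projection, a supporting $\eta$ yields $H_a,H_b$ concurrent with $H=\{\langle\eta,\cdot-q\rangle=0\}$ along $L$; conversely, if some $H_a,H_b$ are concurrent with $H$ along $L$, then $H$ and $\{\langle\eta,\cdot-q\rangle=0\}$ are both pencil members through $q$, hence equal, so $\eta$ supports $B$ at $q$ and $q$ is the projection.

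\textbf{Main obstacle, and a geometric check.} The delicate point is the non-smoothness of $\partial\Omega$: at $a$ or $b$ the normal cone may be higher dimensional, so the argument must be run with subdifferentials and the sum rule $\partial(g+e^{2r}h)=\partial g+e^{2r}\partial h$ rather than with single gradients, and one must track the sign coming from the negative homothety defining $g$; the cases where $a$ or $b$ lies at infinity are handled by letting the corresponding gauge term degenerate, so that the pencil reduces accordingly. As an independent, purely geometric confirmation of the ``if'' direction, one may send $L$ to the hyperplane at infinity by a projective map — legitimate since both the Hilbert metric and the concurrency condition are projective invariants — whereupon $H_a,H_b,H$ become parallel, $\Omega$ is trapped in the slab they bound, the Hilbert pseudo-metric of a slab depends only on the coordinate transverse to it, and monotonicity of the Hilbert metric under $\Omega\subset\mathrm{slab}$ forces $q$ to minimise $d_\Omega(p,\cdot)$ along $H$.
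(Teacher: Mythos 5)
Your proof is correct, and it takes a genuinely different route from the paper's. The paper argues projectively and synthetically: for the ``if'' direction it uses Thales' theorem and cross-ratio monotonicity to show that for any $p'$ on the line $(pq)$ and any $q'\in H$ one has $d_\Omega(p',q')\geq d_\Omega(p',q)$, because $\Omega$ lies inside the region cut out by the two supporting hyperplanes; for the ``only if'' direction it produces a projection point by compactness, observes that both $H$ and the hyperplane $H'$ through $q$ and $\xi\cap\eta$ must support the Hilbert ball $B_\Omega(p,r)$ at $q$, and then splits into a $C^1$ case (uniqueness of the tangent hyperplane forces $H=H'$) and a non-smooth case, which it treats rather tersely by ``adjusting'' one of the supporting hyperplanes. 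You instead encode the ball as the sublevel set $\{g+e^{2r}h\leq e^{2r}-1\}$ of a convex function and run subdifferential calculus: the Moreau--Rockafellar sum rule, together with the standard description of the normal cone to a sublevel set (legitimate here since $q$ is not a minimiser of $g+e^{2r}h$, so $0\notin\partial(g+e^{2r}h)(q)$ and the generated cone is closed), identifies \emph{every} supporting hyperplane of the ball at $q$ with a pencil combination $\zeta_g+e^{2r}\zeta_h$ of supporting normals of $\Omega$ at $a$ and $b$; your Euler-relation computation then gives concurrency in both directions at once. This buys you two things: a uniform, rigorous treatment of the non-smooth case --- exactly the spot where the paper's proof is weakest --- and a complete parametrisation of the normal cone of Hilbert metric balls, which is finer information than the bare equivalence. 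What the paper's argument buys in exchange is elementarity: it never leaves cross-ratio geometry, and its ``if'' direction proves the stronger statement that the whole line $(pq)$ projects to $q$.

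One point to clean up: your pencil argument picks a point $x\in L=H_a\cap H_b$, but $L$ can be empty --- take $\Omega$ the Euclidean unit ball and $p$ its centre, where the supporting hyperplanes at $a$ and $b$ are parallel for every choice of $q$. Concurrency must then be read projectively, with $L$ at infinity (the paper itself allows this in its proof). Your construction survives untouched: the affine form $\langle\eta,\cdot\rangle-(e^{2r}-1)$ defining $\{\langle\eta,\cdot-q\rangle=0\}$ is by definition the sum of the forms $\langle\zeta_g,\cdot\rangle+1$ and $e^{2r}\bigl(\langle\zeta_h,\cdot\rangle-1\bigr)$ defining $H_a$ and $H_b$, so the three hyperplanes always lie in a common pencil (linear dependence of defining forms), which is the correct projective formulation of concurrency; and conversely, when all three are parallel, $H$ and $\{\langle\eta,\cdot-q\rangle=0\}$ are parallel hyperplanes through the same point $q$, hence equal. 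So this is a phrasing fix, not a gap.
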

\begin{proof}
Let us suppose first that such concurrent support hyperplanes exists.
  Let $x$ and $y$ be the intersections of the line $(pq)$ with $\partial\Omega$. Assume that $\xi$ and $\eta$ are supporting hyperplanes
of $\partial\Omega$ respectively at $x$ and $y$ whose intersection with $H$ is the $n-2$-affine space $W$.
Let us show that for any $p'\in (pq)$ and any $q'\in H$ we have
\begin{equation}
  \label{eqmetproj}
  d_\Omega(p',q')\geq d_\Omega(p',q)\text{.}
\end{equation}
Let us suppose that $x$ is on the half line $[qp')$ and $y$ on the half line $[p'q)$ and denote by $x'$ and $y'$ the
intersection of $\partial \Omega$ with the half line $[q'p')$ and $[p'q')$ respectively. Then let $x_0$ be the intersection
of $\xi$ with the line $(p'q')$ and $y_0$ the intersection of $(p'q')$ with $\eta$. By Thales' theorem, the cross-ratio
of $[x_0,p',q',y_0]$ is equal to the cross ratio of $[x,p',q,y]$ and standard computation shows
that $[x_0,p',q',y_0]\leq [x',p',q',y']$, with equality if an only if $x_0=x'$ and $y'=y_0$. Hence
the inequality (\ref{eqmetproj}) holds, and if the convex set is strictly convex, this inequality is always strict, for $q'\neq q$.

\begin{figure}[h]
  \centering
  \includegraphics{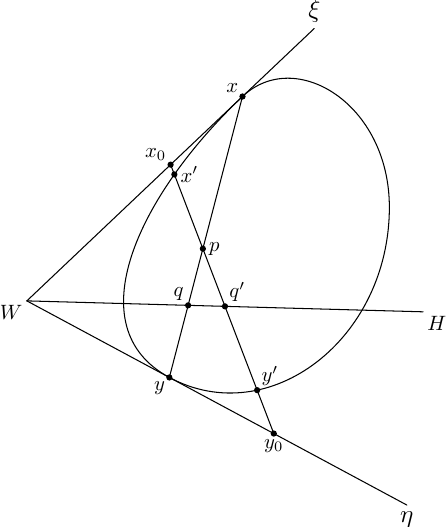}
  \caption{Metric projection of $p$ on $H$.}
\end{figure}
Reciprocally:
recall that when a point $q'$ of $\Omega$ goes to the boundary, its distance to $p$ goes to infinity. Hence by continuity of the distance and compactness
there exists a point $q$ on $H\cap\Omega$ such that $d_\Omega(p,H)=d_\Omega(p,q)$. Now consider the Hilbert ball $B_\Omega(p,r)$ of radius $r=d_\Omega(p,H)$ centred at $p$.
Let once more $x$, $y$, $\xi$ and $\eta$ be defined as before, and let $H'$ be the hyperplane passing by $q$ and $\xi\cap\eta=W$. Then this hyperplane
has to be tangent to  the ball $B_\Omega(p,r)$, otherwise  one can find  a point $q'$ on $H'$ inside the open ball (i.e. $d(p,q')<r)$, however 
by the reasoning done in our first step we would conclude that this point is at a distance bigger or equal to $r$, which would be a contradiction.
By minimality of the point $q$, $H$ is also a supporting hyperplane of $B_\Omega(p,r)$ at $q$. Hence we have to distinguish between two cases.
If $\Omega$ is $C^1$, then by uniqueness of the tangent hyperplanes at every point $H=H'$. Otherwise, $\Omega$ is not $C^1$ at $x$ or $y$.
In that case it is possible to change one of the hyperplane, say $\xi$, with $\xi'$ passing by $x$ and $H\cap\eta$ (which might be at infinity, which would
mean that we consider parallel hyperplanes).
\end{proof}

Notice that there is no uniqueness of the metric projections (also called "foot" by Busemann). However if the convex set is strictly convex,
then we will have a unique projection, if furthermore the convex is $C^1$, this projection will be given by a unique pair of
supporting hyperplanes.

\subsection{Approximability of convex bodies seen as a dimension}

\begin{quote}
In this appendix we relate our definition of approximability with the definition given in~\cite{sw}
\end{quote}

Recall that for a convex body  $\Omega$ and $\varepsilon >0$,  $N(\varepsilon,\Omega)$ denotes the smallest number
of vertices of a polytope whose Hausdorff distance
to $\Omega$ is less than $\varepsilon$.

The following result is due to R.~Schneider and J.~A.~Wieacker~\cite{sw}

\begin{theo}[\cite{sw}]\label{thmsw}
  Let $\underline{a}_s:=\liminf_{\varepsilon \to 0^+} N(\varepsilon,\Omega)\varepsilon^s$, then $s\to \underline{a}_s$ admits
a critical value $\underline{a}(\Omega)$ called \textsl{approximability number of} $\Omega$,  
such that, if $s>\underline{a}(\Omega)$ then
$\underline{a}_s(\Omega)=0$ and if $s< \underline{a}(\Omega)$ then $\underline{a}_s(\Omega)=\infty$.
\end{theo}

In the same way, we can introduce the 
\textsl{upper approximability number of} $\Omega$ , denoted by $\overline{a}(\Omega)$,
as the critical value of $s\mapsto \overline{a}_s(\Omega)$, where 
$$
\overline{a}_s(\Omega):=\limsup_{\varepsilon \to 0^+} N(\varepsilon,\Omega)\varepsilon^s\text{.}
$$

The reader familiar with the definition of the  ball box dimension (also known as Minkowski dimension)
will have no difficulty seeing that this definitions coincide
with the one given  in the introduction of this paper.

Now the main  result  by E.~M.~Bron\u ste\u\i n and L.~D.~Ivanov~\cite{bi}
asserts that for any convex set $\Omega$ inscribed in the unit Euclidean ball,
there are no more than $c(n)\varepsilon^{(1-n)/2}$ points whose convex hull is no
more than $\varepsilon$ away from $\Omega$ in the Hausdorff topology. Which gives

\begin{theo}[E.~M.~Bron\u ste\u\i n and L.~D.~Ivanov~\cite{bi}]\label{bitheo}
  Let $\Omega$ be a convex body in $\R^n$, then
$$
\overline{a}(\Omega)\leq \dfrac{n-1}{2}
$$
\end{theo}

%%%% DESSINS

%\begin{address}
%  Bruno Colbois \\
%  Institut de math\'ematique 
%  Universit\'e de Neuch\^atel 
%  Rue \'Emile Argand 11
%  CH--2007 Neuch\^atel 
%  Switzerland 
%  \email{Bruno.Colbois@unine.ch}
%\end{address}

%\begin{address}
%  Patrick Verovic \\
%  Universit\'e de Savoie 
%  Campus scientifique 
%  Laboratoire de math\'ematique
%  F--73376 Le Bourget-du-Lac cedex 
%  France
%  \email{Patrick.Verovic@univ-savoie.fr}
%\end{address}

\end{document}

%%% Local Variables: 
%%% mode: Tex-Pdf
%%% TeX-master: t
%%% End: 

the equation~\ref{eqlowerlength}
and the co-area inequality from Lemma~\ref{coarea} imply that for all $r>R_2$ we have
\begin{equation}
  \label{eqlowervolume}
  \begin{split}
    \frac{\partial}{\partial r} \vol_\Omega B\bigl(o,r\bigr) \geq \frac{\pi}{4}&\cdot \text{Length}_\Omega\bigl(S(o,r)\bigr) \\
&\geq \frac{\pi}{4}\cdot \Bigl(\widetilde N\bigl(r-(3/2)\ln 3\bigr)-2\Bigr)\cdot K_2\text{.}
  \end{split}
\end{equation}
Therefore integrating the above equation for $R>2\cdot R_2$ over the interval $[R_2,R]$ 
we obtain
\begin{equation}
  \label{eqlowervolume}
  \begin{split}
    \vol_\Omega B\bigl(o,R\bigr)-\vol_\Omega B\bigl(o,R_2\bigr) \geq \frac{\pi}{4}\cdot K_2\cdot \int_{R_2}^R\Bigl(N\bigl(r-(3/2)\ln 3\bigr)-2\Bigr)dr\text{.}
  \end{split}
\end{equation}